\theoremstyle{plain}
\newtheorem{thm}{Theorem}[section]
\newtheorem{prop}[thm]{Proposition}
\newtheorem{lemma}[thm]{Lemma}
\theoremstyle{definition}
\newtheorem{defin}[thm]{Definition}
\theoremstyle{remark}
\newtheorem{rmk}[thm]{Remark}
\numberwithin{equation}{section}
\numberwithin{figure}{section}
\newcommand{\abs}[2][{}]{\lvert{#2}\rvert_{#1}}    
\newcommand{\normsymb}{\|}
\newcommand{\norm}[2]{\normsymb{#1}\normsymb_{#2}}  
\newcommand{\RR}{\mathbb{R}} 
\newcommand{\CC}{\mathbb{C}} 
\newcommand{\NN}{\mathbb{N}} 
\newcommand{\ZZ}{\mathbb{Z}} 
\newcommand{\cH}{{\mathcal H}}
\newcommand{\cU}{{\mathcal U}}
\newcommand{\cT}{{\mathcal T}}
\newcommand{\cP}{{\mathcal P}}
\newcommand{\fa}{{\mathfrak a}}
\newcommand{\fh}{{\mathfrak h}}
\DeclareMathOperator{\dd}{d\!}  
\newcommand{\ee}{\mathrm e}     
\newcommand{\EE}{\mathsf{E}}    
\newcommand{\ii}{\mathrm i}     
\DeclareMathOperator{\Dom}{{\mathcal D}} 
\renewcommand{\epsilon}{\varepsilon}
\DeclareMathOperator{\supp}{supp}
\DeclareMathOperator{\dist}{dist}
\DeclareMathOperator{\diag}{diag}
\title[The reflection principle in the control problem]{The reflection principle in the control problem of the heat equation}
\subjclass[2010]{Primary 35Q93; Secondary 93Bxx}
\keywords{Heat equation, reflection principle, null-controllability, observability, thick set, equidistributed set}
\author[M.~Egidi]{Michela Egidi}
\address[M.~Egidi]{Ruhr Universit\"at Bochum, Fakult\"at f\"ur Mathematik, D-44780 Bochum, Germany}
\email{michela.egidi@ruhr-uni-bochum.de}
\author[A.~Seelmann]{Albrecht Seelmann}
\address[A.~Seelmann]{Technische Universit\"at Dortmund, Fakult\"at f\"ur Mathematik, D-44221 Dortmund, Germany}
\email{albrecht.seelmann@math.tu-dortmund.de}
\date{\today}
\begin{document}

\begin{abstract}
 We consider the control problem for the generalized heat equation for a Schr\"odinger operator on a domain with a reflection
 symmetry with respect to a hyperplane. We show that if this system is null-controllable, then so is the system on its respective
 parts and the corresponding control cost does not exceed the one on the whole domain. 
 As an application, we obtain null-controllability results for the heat equation on half-spaces, orthants, and sectors of angle
 $\pi/2^n$. As a byproduct, we also obtain explicit control cost bounds for the heat equation on certain triangles and corresponding
 prisms in terms of geometric parameters of the control set.
\end{abstract}

\maketitle


\section{Introduction and main result}\label{sec:intro} 

Let $\Omega \subset \RR^d$ be open, $d \in \NN$, and let $A\colon\Omega\rightarrow \RR^{d\times d}$ be measurable with $A(x)$ a
symmetric matrix for almost every $x\in\Omega$. Suppose, in addition, that there exist $\theta_1,\theta_2>0$ such that 
\begin{equation}\label{eq:ellipticity}
 \theta_1 \norm{\xi}{\CC^d}^2
 \leq
 \langle A(x)\xi, \xi \rangle_{\CC^d}
 \leq
 \theta_2 \norm{\xi}{\CC^d}^2
 \qquad \forall \, \xi\in\CC^d,\text{ a.e.~}x\in\Omega,
\end{equation}
and let $V\in L^\infty(\Omega)$ be real-valued. We denote by $H_\Omega^D=H_\Omega^D(A,V)$ and $H_\Omega^N=H_\Omega^N(A,V)$ the
Dirichlet and Neumann realizations of the differential expression
\[
-\nabla\cdot (A\nabla) + V
\]
as self-adjoint lower semibounded operators on $L^2(\Omega)$ defined via their quadratic forms with form domain $H_0^1(\Omega)$ and
$H^1(\Omega)$, respectively. For details of this construction we refer the reader to the discussion in
Section~\ref{sec:proof-main-thm} below.

Let $T>0$, $\omega\subset\Omega$ be a measurable subset, and $\bullet\in \{D, N\}$. We consider the heat-like system
\begin{equation}\label{eq:heat-like-eq} 
 \partial_t u(t) + H^\bullet_\Omega u(t) = \chi_{\omega}v(t) \quad\text{ for }\quad 0<t<T,\quad u(0)=u_0,
\end{equation}
with $u_0\in L^2(\Omega)$ and $v\in L^2((0,T),L^2(\Omega))$. Here, $\chi_\omega$ denotes the characteristic function of $\omega$,
and we call $\omega$ a~\emph{control set} for the system~\eqref{eq:heat-like-eq}.

System~\eqref{eq:heat-like-eq} is said to be~\emph{null-controllable in time $T>0$} if for every initial data $u_0\in L^2(\Omega)$  
there exists a control function $v\in L^2((0,T),L^2(\Omega))$ such that the mild solution of~\eqref{eq:heat-like-eq} satisfies
$u(T)=0$, that is,
\begin{equation}\label{eq:nullcontrol}
 \ee^{-TH_\Omega^\bullet}u_0 + \int_0^T \ee^{-(T-s)H_\Omega^\bullet}\chi_\omega v(s) \dd s = 0
 \quad\text{ in }\quad L^2(\Omega).
\end{equation}
In this case, the quantity
\begin{equation}\label{eq:control_cost}
 C_T:=\sup_{\norm{u_0}{L^2(\Omega)}=1}\inf\{\norm{v}{L^2((0,T),L^2(\omega))} \colon v\text{ satisfies }\eqref{eq:nullcontrol} \}
\end{equation}
is called~\emph{control cost}.

For~\emph{bounded} domains $\Omega$, there is already a rich literature on null-control\-lability results for
system~\eqref{eq:heat-like-eq} with $H_\Omega^\bullet$ being the Laplace or a Schr\"odinger operator with an open or measurable
control set $\omega$, see, e.g.,~\cite{lebeau-robbiano-95,TenenbaumT-07,ApraizEWZ-14,NTTV18-preprint,EV18}. On the other hand,
null-controllability of these systems on~\emph{unbounded} domains has been an issue of growing interest only recently, see for
example~\cite{ENSTTV20,EV18,NTTV18-preprint} and the references therein.
As far as bounds on the control cost are concerned, one classically asks for the type of time dependency, which has been largely
explored in~\cite{FI96,Phung04,TenenbaumT-07,DZZ08,miller:10,EZ11,LL12,NTTV18-preprint}, see also the references therein. However,
the dependency of the control cost on geometric parameters of $\Omega$ and $\omega$ has only recently been addressed
in~\cite{ENSTTV20,Egi18,EV18,NTTV18-preprint}, see also~\cite{miller:04} for previous results. Especially the
work~\cite{NTTV18-preprint} focused on these dependencies, which have been exploited in several asymptotic regimes there in order
to discuss homogenization.

The aim of the present paper is to prove null-controllability results for the above situation on some new unbounded domains, such as
sectors of certain angles, half-spaces, and orthants, with bounds on the associated control cost that are fully or partially
consistent with the known ones on cubes and the full space. As a byproduct, we also obtain explicit control cost bounds in terms of
only geometric parameters of the control set for the corresponding system on some bounded domains with flat boundary that were not
accessible before. We believe that the lack of the control cost's dependency on the domain may be an effect of the flatness of its
boundary. However, it is not our current scope to explore this matter in depth, but merely to add examples to support this
conjecture. All these results are discussed in detail in Section~\ref{sec:application} below as applications of a reflection
principle which we now describe.

The key idea in our considerations is to relate system~\eqref{eq:heat-like-eq} for certain domains $\Omega$ to a corresponding
symmetrized system on a larger domain, where null-controllability results are available. In order to make this precise, let
$M\colon\RR^d\rightarrow\RR^d$ be the reflection with respect to the first coordinate, that is,
$M(x_1,\ldots,x_d)=(-x_1,x_2,\ldots, x_d)$. We suppose that $\Omega$ is contained in the half-space $(0,+\infty)\times\RR^{d-1}$
such that $\Omega = \tilde{\Omega}\cap((0,+\infty)\times\RR^{d-1})$ for some open set $\tilde\Omega\subset\RR^d$ with
$\Gamma := \tilde{\Omega} \cap (\{0\} \times \RR^{d-1}) \neq \emptyset$ and which is symmetric with respect to the reflection $M$,
that is, $M(\tilde\Omega)=\tilde{\Omega}$. In particular, we have $\tilde{\Omega}=\Omega\cup\Gamma\cup M(\Omega)$,
cf.~Figure~\ref{fig:reflection}.

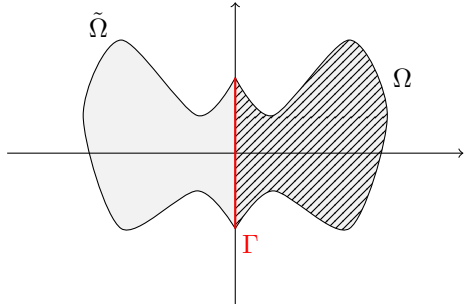
\begin{figure}[htb]
\begin{tikzpicture}[scale=1]
\filldraw[color=black, fill=black!5] plot [smooth] coordinates {(0,-1) (-0.5, -0.5) (-1.5, -1) (-2, 0.5) (-1.5, 1.5) (-0.5, 0.5) (0,1)};
\filldraw[color=white, fill=black!5] plot [smooth] coordinates {(0,-1) (0.5, -0.5) (1.5, -1) (2, 0.5) (1.5, 1.5) (0.5, 0.5) (0,1)};
\filldraw[ pattern=north east lines] plot [smooth] coordinates {(0,1) (0.5,0.5) (1.5,1.5) (2,0.5) (1.5,-1) (0.5,-0.5) (0,-1)};
 \node at (-1.8,1.7) {\small$\tilde\Omega$};
 \node at (2.2, 1) {\small$\Omega$};
 \draw[->] (-3, 0) -- (3,0);
 \draw[->] (0, -2) -- (0,2);
 \draw[red,thick] (0,1) -- (0,-1); \node[red] at (0.2,-1.2) {\small$\Gamma$};
\end{tikzpicture}
\caption{$2$-dimensional example of $\tilde\Omega$, $\Omega$, and $\Gamma$.}\label{fig:reflection}
\end{figure}

Let $\tilde{A}\colon\tilde{\Omega}\rightarrow\RR^{d\times d}$ and $\tilde{V}\colon\tilde{\Omega}\rightarrow\RR$ be measurable with
\[
\tilde{A}(x) = \left\{
\begin{array}{ll}
A(x), & x\in \Omega\\
U (A\circ M)(x) U, & x\in M(\Omega)
\end{array}\right.
,\qquad U:=\diag(-1,1,\ldots,1),
\]
and
\[
\tilde{V}(x)=\left\{
\begin{array}{ll}
V(x), & x\in\Omega \\
(V\circ M)(x), & x\in M(\Omega) 
\end{array}\right..
\]
Observe that by construction $\tilde{A}(x)$ is for almost every $x\in\tilde{\Omega}$ a symmetric matrix
satisfying~\eqref{eq:ellipticity} with the same constants $\theta_1,\theta_2$.

Let the self-adjoint operators $H_{\tilde\Omega}^\bullet=H_{\tilde\Omega}^\bullet(\tilde{A},\tilde{V})$ on $L^2(\tilde{\Omega})$
associated with the differential expression
\[
-\nabla\cdot (\tilde{A}\nabla) + \tilde{V}
\]
be defined analogously to $H_\Omega^\bullet(A,V)$ above. Set $\tilde{\omega}:= \omega\cup M(\omega)\subset \tilde{\Omega}$, and
consider the corresponding symmetrized system 
\begin{equation}\label{eq:system-big-domain} 
\partial_t \tilde{u}(t) + H_{\tilde\Omega}^\bullet \tilde{u}(t) = \chi_{\tilde{\omega}}\tilde{v}(t) \qquad\text{ for }\quad 0<t<T,\quad
\tilde{u}(0)=\tilde{u}_0,
\end{equation}
with $\tilde{u}_0\in L^2(\tilde\Omega)$, $\tilde{v}\in L^2((0,T),L^2(\tilde\Omega))$, and control set $\tilde{\omega}$. The notions
of null-controllability and control cost for this system carry over verbatim.

The main result of the present paper now reads as the following theorem, which can actually be formulated also for much more general
situations than the ones we discuss here, cf.~Section~\ref{sec:abstract-result} below.

\begin{thm}\label{thm:null-controllability}
 If the system~\eqref{eq:system-big-domain} is null-controllable in time $T>0$ with control cost $\tilde{C}_T$, then also
 system~\eqref{eq:heat-like-eq} is null-controllable in time $T>0$ with control cost $C_T\leq \tilde{C}_T$.
\end{thm}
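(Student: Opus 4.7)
The plan is to employ a reflection-principle argument that identifies the problem on $\Omega$ with the problem on $\tilde\Omega$ restricted to the subspace of reflection-symmetric functions. To this end I would introduce the extension operator $E\colon L^2(\Omega)\to L^2(\tilde\Omega)$ given by $(Eu)(x)=u(x)$ for $x\in\Omega$ and $(Eu)(x)=\sigma\, u(M(x))$ for $x\in M(\Omega)$, where $\sigma=+1$ for $\bullet=N$ (even reflection) and $\sigma=-1$ for $\bullet=D$ (odd reflection). By construction $\norm{Eu}{L^2(\tilde\Omega)}^2=2\norm{u}{L^2(\Omega)}^2$, and $Eu$ is invariant under the reflection operator $R\colon f\mapsto \sigma\,f\circ M$ on $L^2(\tilde\Omega)$.

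The first and crucial step is to establish that $E$ intertwines the semigroups, i.e., $\ee^{-tH_{\tilde\Omega}^\bullet}E=E\,\ee^{-tH_\Omega^\bullet}$. I would reduce this to showing that the quadratic form $\tilde a[f]=\int_{\tilde\Omega}(\langle \tilde A\nabla f,\nabla f\rangle+\tilde V|f|^2)$ is invariant under $R$ and that $E$ maps the form domain of $H_\Omega^\bullet$ ($H^1(\Omega)$ for $\bullet=N$, resp.~$H_0^1(\Omega)$ for $\bullet=D$) bijectively onto the $R$-invariant part of the form domain of $H_{\tilde\Omega}^\bullet$. The form-invariance follows from a direct computation: the identities $\nabla(Rf)(x)=\sigma\,U(\nabla f)(M(x))$ and $\tilde A(x)U=U\tilde A(M(x))$, built into the definition of $\tilde A$, together with the change of variables $y=M(x)$, transform the pointwise integrand of $\tilde a[Rf]$ on $M(\Omega)$ into that of $\tilde a[f]$ on $\Omega$ and vice versa, while $\tilde V\circ M=\tilde V$ takes care of the potential term. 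The boundary compatibility is immediate: an even extension of an $H^1(\Omega)$ function lies in $H^1(\tilde\Omega)$, while an odd extension of an $H_0^1(\Omega)$ function vanishes across $\Gamma$ by the Dirichlet condition and therefore lies in $H_0^1(\tilde\Omega)$, using that $M(\partial\tilde\Omega)=\partial\tilde\Omega$.

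With the intertwining in hand, the cost estimate follows. Given $u_0\in L^2(\Omega)$ and $\epsilon>0$, null-controllability of~\eqref{eq:system-big-domain} yields a control $\tilde v$ steering $\tilde u_0:=Eu_0$ to zero with $\norm{\tilde v}{L^2((0,T),L^2(\tilde\omega))}\le(\tilde C_T+\epsilon)\norm{\tilde u_0}{L^2(\tilde\Omega)}=\sqrt 2\,(\tilde C_T+\epsilon)\norm{u_0}{L^2(\Omega)}$. Symmetrizing via $\tilde v_{\mathrm{sym}}:=\tfrac12(\tilde v+R\tilde v)$ produces a control that still steers $\tilde u_0$ to zero, because $R$ commutes with $\ee^{-sH_{\tilde\Omega}^\bullet}$ and with multiplication by $\chi_{\tilde\omega}$ (as $M(\tilde\omega)=\tilde\omega$) and $R\tilde u_0=\tilde u_0$; moreover $\norm{\tilde v_{\mathrm{sym}}}{L^2((0,T),L^2(\tilde\omega))}\le\norm{\tilde v}{L^2((0,T),L^2(\tilde\omega))}$ by the triangle inequality and the isometry of $R$ on $L^2(\tilde\omega)$. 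Setting $v:=\tilde v_{\mathrm{sym}}|_\Omega$ then gives $Ev=\tilde v_{\mathrm{sym}}$, and the intertwining applied to the mild-solution formula shows that the $\tilde\Omega$-solution equals $Eu$, where $u$ solves the $\Omega$-problem with datum $u_0$ and control $v$; hence $u(T)=0$. Since $\norm{v}{L^2((0,T),L^2(\omega))}^2=\tfrac12\norm{\tilde v_{\mathrm{sym}}}{L^2((0,T),L^2(\tilde\omega))}^2\le(\tilde C_T+\epsilon)^2\norm{u_0}{L^2(\Omega)}^2$, letting $\epsilon\to 0$ yields $C_T\le\tilde C_T$.

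The main obstacle will be to pin down the intertwining precisely on the correct form or operator domains, ensuring that the sign convention encoded in $U=\diag(-1,1,\ldots,1)$ is exactly what makes both the bilinear form and the boundary conditions compatible with the symmetric extension. Everything else reduces to bookkeeping once this reflection principle is in place.
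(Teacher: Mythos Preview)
Your proposal is correct and is essentially the paper's argument, just packaged differently: the paper routes the reflection principle through an abstract intertwining theorem (its Theorem~\ref{thm:abstractResult}) using the adjoint map $(X^\bullet)^*=E^*$ and the extension relation $(X^\bullet)^*H_{\tilde\Omega}^\bullet\subset H_\Omega^\bullet(X^\bullet)^*$, whereas you work directly with $E=X^\bullet$, the $R$-invariance of the form, and an explicit symmetrization of the control---but both reduce to the same technical core, namely that $E$ and $E^*$ respect the form domains (the paper's Lemma~\ref{lemma:domainX}, proved via the integration-by-parts Lemma~\ref{lem:intParts}), which is precisely the ``main obstacle'' you flag. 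Your $\sqrt{2}$ bookkeeping in the cost estimate matches the paper's $\norm{\hat Y}{}\norm{Z}{}=1$.
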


The restriction to the reflection symmetry with respect to the hyperplane $\{0\}\times\RR^{d-1}$ in
Theorem~\ref{thm:null-controllability} is not essential. Indeed, by rotating the whole system, we can deal with reflection
symmetries with respect to any hyperplane in $\RR^d$. This way, the above theorem allows us to infer null-controllability on the
respective parts of a domain with a reflection symmetry if the system on the whole domain is null-controllable.

The rest of the paper is organized as follows. In Section~\ref{sec:application} we discuss the null-controllability results and
control cost bounds announced above for the case where $A(x)$ is the identity matrix as consequences of
Theorem~\ref{thm:null-controllability}. Here we will make explicit use of recent results
from~\cite{EV18,NTTV18-preprint} for cubes and the whole space. The assumption on the essential boundedness
of the potential $V$ is tailored towards these applications but it is not essential for the general argument.
In Section~\ref{sec:abstract-result} we prove an abstract result which is not only the core of the proof of
Theorem~\ref{thm:null-controllability} but also promises to have a much broader range of application and is therefore of its own
interest, see, e.g., Remarks~\ref{rmk:fractionalLaplacian-BanachSpaces},~\ref{rmk:modification}, and~\ref{rmk:triangle} below.
Section~\ref{sec:proof-main-thm} deals with the proof of Theorem~\ref{thm:null-controllability} based on the abstract result of
the preceding section. Finally, Appendix~\ref{sec:intParts} provides an integration by parts formula used in
Section~\ref{sec:proof-main-thm}.

\section{Applications}\label{sec:application}

We here discuss the applications of Theorem~\ref{thm:null-controllability} mentioned in the previous section for the particular
case where each $A(x)$ is the identity matrix, that is, $H_\Omega^\bullet = -\Delta_\Omega^\bullet + V$. These applications draw
upon null-controllability results from the works~\cite{EV18,NTTV18-preprint} on cubes and the full space. As considered there, we
also take control sets of the form $\omega = \Omega \cap S$, where $S \subset \RR^d$ is some measurable set with certain geometric
properties, namely a thick set or an equidistributed set. We treat the two types of sets in the subsections below separately.

The general strategy for our applications is to build from the given set $S$ a symmetric set $\tilde{S}$ with respect to the
reflection such that $\Omega \cap \tilde{S} = \Omega \cap S$. After having verified corresponding geometric properties of
$\tilde{S}$, this new set yields a suitable control set $\tilde{\omega} = \tilde{\Omega} \cap \tilde{S}$ for the symmetrized system
on $\tilde{\Omega}$, where a null-controllability result is available. Applying Theorem~\ref{thm:null-controllability} then gives
the desired null-controllability result for the original system on $\Omega$. In summary, our main task here is to establish the
geometric properties of the symmetrized set $\tilde{S}$ from those of the given set $S$.

Recall that by rotation of the whole system Theorem~\ref{thm:null-controllability} can be applied for a reflection with respect to
any hyperplane in $\RR^d$. Since the Laplacian is rotation invariant, only the potential $V$ then gets rotated, but remains
essentially bounded. In this context, we remind the reader that $M\colon\RR^d\to\RR^d$ denotes the reflection with respect to the
hyperplane $\{0\}\times\RR^{d-1}$. Furthermore, for the rest of this section, we use the following notation: 
\begin{itemize} 
 \item $H_{\theta}:=\{(x_1,\ldots, x_d)\in\RR^d \, \colon \, x_2<(\tan\theta) x_1\}$ for $0 \le \theta < \pi/2$;
 \item $M_{\theta} \colon\RR^d\rightarrow \RR^d$ denotes the reflection with respect to $\partial H_{\theta}$;
 \item $\Lambda_L^d := (0,L)^d$ for $L>0$.
\end{itemize}

\subsection{Null-controllability from thick sets}\label{subsec:thick} 

Let $V=0$, and consider the system
\begin{equation}\label{eq:heat} 
 \partial_t u(t) - \Delta^\bullet_\Omega u(t) = \chi_{\omega}v(t) \quad\text{ for }\quad 0<t<T,\quad u(0)=u_0,
\end{equation}
with $u_0\in L^2(\Omega)$ and $v\in L^2((0,T),L^2(\Omega))$. Moreover, assume that the control set $\omega$ is of the form
$\omega=\Omega\cap S$ with some $S\subset\RR^d$ that is $(\gamma,a)$-thick in the sense of the following definition. 

\begin{defin}\label{defin:thickness}
 A measurable set $S \subset \RR^d$ is called~\emph{thick} if there exist $\gamma\in (0,1]$ and $a=(a_1,\ldots,a_d)\in(0,+\infty)^d$
 such that 
 \[
  \abs{S\cap (x+[0,a_1]\times\ldots\times [0,a_d])}\geq \gamma\prod_{j=1}^d a_j \quad \forall \, x\in\RR^d,
 \]
 where $\abs{\,\cdot\,}$ denotes the Lebesgue measure. In this case, $S$ is also referred to as $(\gamma, a)$-thick to emphasise the
 parameters.
\end{defin}

The above definition has played a crucial role in a recent development on the null-controllability of the heat equation 
on $\RR^d$.
In \cite{EV18}, see also~\cite{WWZZ19}, it is shown that thickness of $S$ is a necessary and sufficient condition for the heat
equation~\eqref{eq:heat} on $\Omega=\RR^d$ to be null-controllable.
Moreover, in~\cite{EV18} the authors give an explicit estimate of the control cost 
in dependence of the thickness parameters of the set $S$ and the time $T$.
In the same paper the authors also consider the heat equation~\eqref{eq:heat} on the cube $\Omega=\Lambda_L^d$ with control set
$\omega=\Omega \cap S$, where $S$ is $(\gamma,a)$-thick with $a_j\leq L$ for all $j=1,\ldots, d$. In this case, they show that
null-controllability holds in any time $T>0$ with a bound on the control cost of the same form as for the full space case and
independent of the scale $L$.

In~\cite{NTTV18-preprint} these bounds have been strengthened in both time and geometric parameters dependency and the authors have
shown that they are close to optimality in certain asymptotic regimes, see \cite[Section 4]{NTTV18-preprint}. 
For both cases $\Omega\in\{\RR^d,\Lambda_L^d\}$ they can be written as
\begin{equation}\label{bounds:control-cost-thick}
 C_T\leq \frac{1}{\sqrt{T}}\left(\frac{K^d}{\gamma}\right)^{Kd/2}\exp\left(\frac{K\norm{a}{1}^2\ln^2(K^d/\gamma)}{2T}\right),
\end{equation}
where $K>0$ is a universal constant and $\norm{a}{1}=a_1+\dots+a_d$, see~\cite[Theorem 3.7]{NTTV18-preprint}.

We show below that system~\eqref{eq:heat} on half-spaces, positive orthants, and sectors of angle $\pi/2^n$, $n\geq 2$, is
null-controllable with a control cost bound fully or partially consistent with~\eqref{bounds:control-cost-thick}. Moreover, we
obtain null-control\-lability and explicit control cost bounds for the same system on isosceles right-angled triangles and
corresponding prisms. We once again emphasise that the results for the listed unbounded domains are new, even though the heat
equation on half-spaces and positive orthants with Dirichlet boundary conditions could also be treated with the techniques
from~\cite{SV20} (see also Remark~\ref{rmk:dirichlet-bc} below), while the novelty of the result on triangles and corresponding
prisms is the explicitness of the control cost bound on the model parameters. 

In order to enter the setting of the main theorem, let $\tilde{\Omega}\in\{\RR^d,\Lambda_L^d\}$ and
$\tilde{\omega}=\tilde{\Omega}\cap\tilde{S}$ with some $(\tilde{\gamma},\tilde{a})$-thick set $\tilde{S}\subset\RR^d$, and consider
the system
\begin{equation}\label{eq:heat:tilde} 
 \partial_t \tilde{u}(t) - \Delta^\bullet_{\tilde\Omega} \tilde{u}(t) = \chi_{\tilde{\omega}}\tilde{v}(t) \quad\text{ for }\quad 0<t<T,\quad \tilde{u}(0)=\tilde{u}_0,
\end{equation}
with $\tilde{u}_0\in L^2(\tilde{\Omega})$ and $\tilde{v}\in L^2((0,T),L^2(\tilde{\Omega}))$.  

We need the following easy geometric lemma, parts of which are already contained in~\cite{EV18}.

\begin{lemma}\label{lemma:thickness}
Let $S\subset\RR^d$ be $(\gamma, a)$-thick.
\begin{itemize} 
\item [(a)] Let $S'=S\cap((0,+\infty)\times\RR^{d-1})$. Then, the set $\tilde{S}=S'\cup M(S')$ is
$(\tilde{\gamma}, \tilde{a})$-thick with $\tilde{\gamma}=\gamma/2$ and $\tilde{a}=(2a_1,a_2,\ldots, a_d)$.

\item [(b)] The set $\tilde{S}=\{(x_1,\dots,x_d) \,\colon\, (\abs{x_1},\dots,\abs{x_d})\in S\}$ is $(\gamma/2^d, 2a)$-thick, where
$2a=(2a_1, \ldots, 2a_d)$. 
 
\item [(c)] Let $S'=S\cap H_{\theta}$. Then, $\tilde{S}=S'\cup M_{\theta}(S')$ is a $(\tilde{\gamma}, \tilde{a})$-thick set with
parameters $\tilde{\gamma}=\frac{\gamma a_1 a_2}{4(a_1^2+a_2^2)}$ and 
$\tilde{a}=(2\sqrt{a_1^2+a_2^2},2\sqrt{a_1^2+a_2^2}, a_3,\ldots, a_d)$.
\end{itemize}
\end{lemma}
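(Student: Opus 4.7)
My plan is to verify the thickness condition for each $\tilde S$ directly: for any axis-aligned box $B$ of dimensions $\tilde a$, I would locate an axis-aligned sub-box $B_0$ of dimensions $a$ inside a region on which (a reflected copy of) $\tilde S$ agrees with $S$, and then invoke $(\gamma,a)$-thickness of $S$ to obtain $|\tilde S\cap B|\geq\gamma\prod a_j$, which will match $\tilde\gamma\prod\tilde a_j$ by construction of $\tilde\gamma$ and $\tilde a$ in each case.

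For (a), I would split according to whether $B=x+[0,2a_1]\times[0,a_2]\times\cdots\times[0,a_d]$ crosses the hyperplane $\{x_1=0\}$. If $B\subset\{x_1\geq 0\}$, then $\tilde S\cap B=S\cap B$ (modulo the measure-zero boundary) and $B$ itself contains an $a$-box since its first edge has length $2a_1\geq a_1$; the case $B\subset\{x_1\leq 0\}$ is symmetric via the reflection $M$, using $|\tilde S\cap B|=|S\cap M(B)|$. If $B$ crosses $\{x_1=0\}$, split $B=B^+\cup B^-$; the two first-coordinate widths sum to $2a_1$, so one of them (say $B^+$) has width at least $a_1$ and hence contains an $a$-box, giving $|\tilde S\cap B|\geq|S\cap B^+|\geq\gamma\prod a_j$. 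Part (b) will then follow by iterating (a) in each of the $d$ coordinates: since the coordinate reflections commute, each iteration preserves the symmetries of the earlier ones, so that at the $k$-th step one obtains a $(\gamma/2^k,(2a_1,\dots,2a_k,a_{k+1},\dots,a_d))$-thick set, and after $d$ steps one arrives at the fully symmetrised $\tilde S$, which is therefore $(\gamma/2^d,2a)$-thick.

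For (c), given $B$ of dimensions $(2r,2r,a_3,\dots,a_d)$ with $r=\sqrt{a_1^2+a_2^2}$, and writing $B^+=B\cap H_\theta$, $B^-=B\setminus H_\theta$, I would begin from
\[
|\tilde S\cap B|=|S\cap B^+|+|S\cap M_\theta(B^-)|\geq|S\cap R|,\qquad R:=B^+\cup M_\theta(B^-)\subset H_\theta,
\]
and show that $R$ contains an axis-aligned $a$-box. Because $M_\theta$ is the identity in coordinates $3,\dots,d$, $R$ is a cylinder over its two-dimensional $(x_1,x_2)$-cross-section, so the claim reduces to: the set $(Q\cap H_\theta)\cup M_\theta(Q\setminus H_\theta)$ contains an axis-aligned $a_1\times a_2$ rectangle, where $Q$ is the cross-section of $B$, an axis-aligned square of side $2r$ whose inscribed disk $D$ has radius $r$ and centre $c_Q$. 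Setting $|s|=\dist(c_Q,L)$ with $L=\partial H_\theta$, I would split into two cases: if $|s|\geq r$, either $D$ or $M_\theta(D)$ is entirely contained in $H_\theta$, so $R$ contains a disk of radius $r$; if $|s|<r$, WLOG $c_Q\in H_\theta$, and the larger circular segment $D\cap H_\theta\subset R$ admits an inscribed disk of radius $(r+|s|)/2\geq r/2$. In either case $R$ contains a disk of radius at least $r/2$, which in turn contains the axis-aligned $a_1\times a_2$ rectangle because that rectangle has circumradius $\sqrt{(a_1/2)^2+(a_2/2)^2}=r/2$. Extending across the $a_3,\dots,a_d$ extents of $B$ gives the required $a$-box inside $R$, and the bound $|\tilde S\cap B|\geq\gamma\prod a_j=\tilde\gamma\prod\tilde a_j$ follows from $\tilde\gamma\cdot 4(a_1^2+a_2^2)=\gamma a_1 a_2$.

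The hardest step is the geometric claim in (c). Placing the rectangle at the deepest corner of $Q$ on the $H_\theta$ side of $L$ only succeeds when that side has enough depth; when it does not, the analogous rectangle inside $Q\setminus H_\theta$ is no longer axis-aligned after $M_\theta$ is applied to it. The inscribed-disk approach sidesteps this by exploiting the rotational invariance of disks: an axis-aligned rectangle of circumradius $r/2$ fits in any disk of radius $r/2$ no matter its orientation, so the whole question reduces to the elementary estimate $(r+|s|)/2\geq r/2$ on the inscribed radius of a circular segment.
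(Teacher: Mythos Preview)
Your argument is correct in all three parts. Parts~(a) and~(b) are essentially the paper's argument, phrased with explicit case splits.

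For part~(c) you take a genuinely different route. The paper observes that the inscribed disk $D$ of radius $r=\sqrt{a_1^2+a_2^2}$ in the $\tilde a$-square always contains an axis-aligned $a_1\times a_2$ rectangle with one \emph{corner} at the centre $c$ of $D$, extending in the $(+x_1,-x_2)$ direction; since $\partial H_\theta$ has non-negative slope, this rectangle lies in $\overline{H_\theta}$ whenever $c\in\overline{H_\theta}$, and if $c\notin\overline{H_\theta}$ one runs the same construction from $M_\theta(c)$ and reflects. Thus the paper never needs to shrink the disk. Your approach instead folds the box into $H_\theta$ via $R=B^+\cup M_\theta(B^-)$, retains only the circular segment $D\cap H_\theta$ (or its mirror), and then invokes the elementary estimate that the larger segment of a disk of radius $r$ cut by a chord at distance $|s|$ from the centre admits an inscribed disk of radius $(r+|s|)/2\ge r/2$; the $a_1\times a_2$ rectangle, having circumradius exactly $r/2$, then fits inside. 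Both arguments yield the same parameters $\tilde\gamma,\tilde a$; the paper's is shorter because it exploits the specific orientation of $\partial H_\theta$ to place the rectangle directly, while yours trades that observation for a rotation-invariant inscribed-disk argument that is slightly longer but does not rely on spotting the ``corner-at-centre, extend right-down'' placement. Your closing paragraph suggests you were looking for a rectangle anchored at a corner of $Q$ rather than at the centre of $D$; the paper's trick is precisely to anchor at the centre, which always leaves enough room on the $H_\theta$ side.
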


\begin{proof}
We abbreviate $Q_a:=[0,a_1]\times\dots\times[0,a_d]$, $a=(a_1,\dots,a_d)\in(0,+\infty)^d$.

For part~(a) we need to show that
\[
\abs{\tilde{S}\cap (x+ Q_{\tilde a})}\geq \frac{\gamma}{2}2a_1\prod_{j=2}^d a_j = \tilde{\gamma}\abs{Q_{\tilde a}} \qquad \forall \, x\in\RR^d.
\]

Let $x\in\RR^d$. 
There is $y\in \RR^d$ such that $y+Q_a\subset (x+Q_{\tilde a})\cap([0,+\infty)\times\RR^{d-1})$ or
$M(y+Q_a)\subset (x+Q_{\tilde a})\cap((-\infty,0])\times\RR^{d-1})$, cf.~Figure~\ref{fig:thick}\,(a). 
In the first case, we have
\[
\abs{\tilde{S}\cap (x+Q_{\tilde a})}  \geq \abs{S'\cap (y+ Q_a)} = \abs{S\cap (y+ Q_a)}\geq \gamma \prod_{j=1}^d a_j = \tilde{\gamma}\abs{Q_{\tilde a}}.
\]

In the second case, we have
\[
\abs{\tilde{S}\cap (x+Q_{\tilde a})}  \geq \abs{M(S')\cap M(y+ Q_a)} = \abs{S\cap (y+ Q_a)}\geq \tilde{\gamma}\abs{Q_{\tilde a}},
\]
which completes the proof of part~(a).

For part~(b), we observe that $\tilde{S}$ can be obtained from $S\cap[0,+\infty)^d$ by successive reflection with respect to all
coordinate axes. In this regard, part~(b) follows by analogous arguments as in part~(a), cf.~Figure~\ref{fig:thick}\,(b); for more details,
see also the proof of~\cite[Theorem~4]{EV18}.

Finally, we prove part~(c). Let $x\in \RR^d$. Then, there exists $y\in\RR^d$ such that
$y+Q_a\subset (x+Q_{\tilde a})\cap \overline{H_{\theta}}$ or
$M_{\theta}(y+Q_a)\subset (x+Q_{\tilde a})\cap (\RR^d\setminus H_{\theta})$.
This follows from the fact that $x+Q_{\tilde a}$ contains the cylinder 
$x'+\big(B_2(0,\sqrt{a_1^2+a_2^2})\bigtimes_{j=3}^d [0,a_j]\big)$
for some $x'\in\RR^d$, where $B_2(0,r)$ stands for the Euclidean 2-dimensional ball centred at zero with radius $r$, and such a
cylinder contains a parellelepiped of type $y+ Q_a$ as well as its reflection with respect to $M_\theta$,
cf.~Figure~\ref{fig:thick}\,(c). The claim now follows by analogous calculations as in part~(a), taking into account that 
$\gamma \abs{Q_a} = \frac{\gamma a_1a_2}{4(a_1^2+a_2^2)}\abs{Q_{\tilde a}} = \tilde{\gamma}\abs{Q_{\tilde a}}$.
\end{proof}

\begin{figure}[htb]\label{fig:thick}
\centering
\begin{tikzpicture}[scale=0.6]
\begin{scope}[xshift=-12cm]
\node at (-3.5,4) {$(a)$};
\draw (-1,1) rectangle (2,1.5);  
\filldraw[color=black, fill=black!5] (0.5,1) rectangle (2,1.5);

\draw[<->] (0.5,0.8) -- (2,0.8); \node at (1.2,0.6) {\tiny$a$};
\draw[<->] (-1, 0.4) -- (2,0.4); \node at (0.5,0.2) {\tiny$2a$};

\draw (-2.5,2) rectangle (0.5,2.5);  
\filldraw[color=black, fill=black!5] (-2.5,2) rectangle (-1,2.5);

\draw[->] (-3.5,0) -- (3.5,0); 
\node at (3.5, -0.3) {\tiny$x_1$};
\draw[->] (0,-1) -- (0,4); 
\node at (0.2, 4.1) {\tiny$x_2$};
\end{scope}
\begin{scope}[yshift=-1cm]
\node at (-3.5,5) {$(b)$};
\draw (-1,0.7) rectangle (2,2.7);
\draw[dashed] (-1,1.7) -- (2, 1.7);
\draw[dashed] (0.5, 0.7) -- (0.5,2.7);
\filldraw[color=black, fill=black!5] (0.5, 1.7) rectangle (2, 2.7);

\draw[<->] (0.5, 2.85) -- (2, 2.85); \node at (1.2, 3) {\tiny$a_1$};
\draw[<->] (-1,3.3)--(2, 3.3); \node at (0.5, 3.5) {\tiny$2a_1$};
\draw[<->] (2.2, 1.7)-- (2.2, 2.7); \node at (2.5, 2.3) {\tiny$a_2$};
\draw[<->] (2.8, 0.7) -- (2.8, 2.7); \node at (3.2, 2) {\tiny$2a_2$};
 
\draw[->] (-3.5,1) -- (3.5,1); 
\node at (3.5, 0.7) {\tiny$x_1$};
\draw[->] (0,0) -- (0,5); 
\node at (0.2, 5.1) {\tiny$x_2$};
\end{scope}
\begin{scope}[xshift=-5.5cm,yshift=-6.5cm]
\node at (-3.5,4) {$(c)$};
\draw (-3,-1.5) -- (3.5, 1.75);
\filldraw[color=black, fill=black!5] (0,0) rectangle (2,-1);
\draw[<->] (0,-1) -- (2,-1);
\draw[<->] (2,-1) -- (2,0);
\node at (0.8, -1.2) {\tiny$a_1$};
\node at (1.7, -0.5) {\tiny$a_2$};

\draw[dashed] (0,0) -- (2,-1);
\draw (0,0) -- (-0.8,0.6);
\draw (-0.8,0.6) -- (0.4,2.2 );
\draw (0.4, 2.2) -- (1.2,1.6);
\draw (1.2,1.6) -- (0,0);
\draw[dashed] (0,0) -- (0.4,2.2);

\draw (0,0) circle (2.2360cm);
\draw (-2.2360, 2.2360) rectangle (2.2360,-2.2360);
\draw[<->] (-2.2360, 2.4) -- (2.2360, 2.4); 
\node at (-1.4, 2.75) {\tiny$2(a_1^2+a_2^2)^{1/2}$};

\draw[->] (-3.5,0) -- (3.5,0); 
\node at (3.5, -0.3) {\tiny$x_1$};
\draw[->] (0,-3) -- (0,4); 
\node at (0.2, 4.1) {\tiny$x_2$};
\end{scope}
\end{tikzpicture}
\caption{Proof of Lemma~\ref{lemma:thickness}: Positions of the parallelepipeds.}
\end{figure}
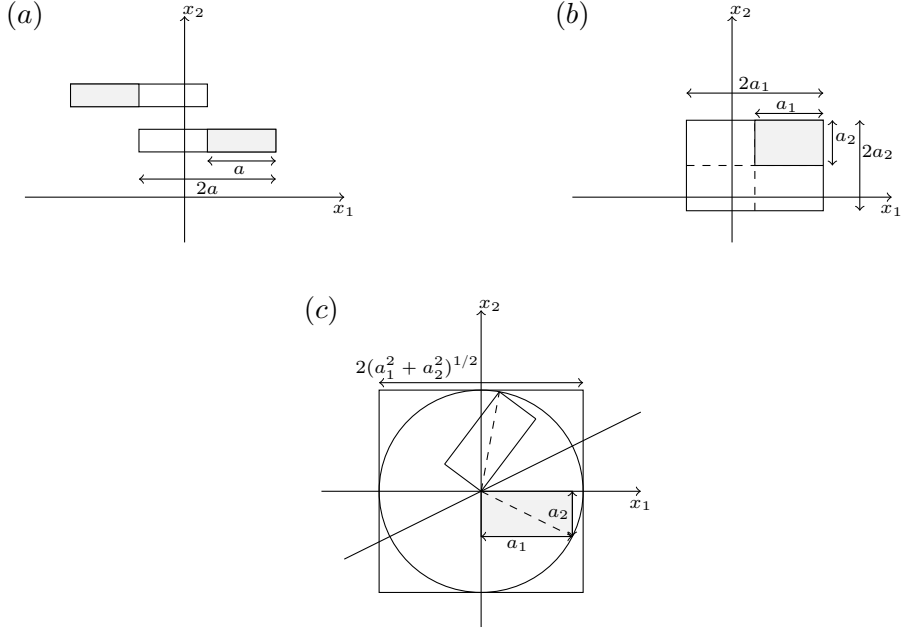

\begin{rmk}
 The choice of the parameters $\tilde\gamma$ and $\tilde a$ in part~(c) of Lemma~\ref{lemma:thickness} takes care of every possible
 orientation of reflected parallelepipeds, which are in general not parallel to coordinate axes (with the exception of $\theta=\pi/4$).
 In this regard, the angle $\theta=\pi/8$ constitutes a kind of worst case, whereas especially the angle $\theta = \pi/4$ could have
 been handled in a slightly more efficient way. We refrained from doing so for the sake of simplicity.
\end{rmk}

\begin{prop}[The heat equation on half-spaces]\label{prop:control-half-spaces}
  The system~\eqref{eq:heat} on $\Omega=(0,+\infty)\times\RR^{d-1}$ is null-controllable in any time $T>0$ with control cost 
  \begin{equation}\label{eq:control-cost-half-space}
  C_T\leq \frac{1}{\sqrt{T}}\left(\frac{2K^d}{\gamma}\right)^{Kd/2}\exp\left(\frac{K(2a_1+a_2+\ldots+a_d)^2\ln^2(2 K^d/\gamma)}{2T}\right),
  \end{equation}
  where $K>0$ is the universal constant from \eqref{bounds:control-cost-thick}.
\end{prop}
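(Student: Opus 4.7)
The plan is to apply Theorem~\ref{thm:null-controllability} with the natural choice $\tilde\Omega = \RR^d$, which is precisely the reflection-symmetric extension of the half-space $\Omega=(0,+\infty)\times\RR^{d-1}$ under $M$ (up to the measure-zero hyperplane $\Gamma = \{0\}\times\RR^{d-1}$). Because $V=0$, the extended potential $\tilde V$ also vanishes, so the symmetrized system~\eqref{eq:heat:tilde} on $\RR^d$ is just the ordinary heat equation, for which the control cost bound~\eqref{bounds:control-cost-thick} from~\cite{NTTV18-preprint} is available. The task therefore reduces to producing a thick control set $\tilde\omega \subset \RR^d$ of the form $\tilde\omega = \omega \cup M(\omega)$ demanded by the theorem, and to tracking how the thickness parameters transform.

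Concretely, writing $\omega = \Omega \cap S$ for the given $(\gamma,a)$-thick set $S$ and setting $S' := S \cap \Omega$, I define $\tilde S := S' \cup M(S')$. Then $\tilde\Omega \cap \tilde S = S' \cup M(S') = \omega \cup M(\omega)$, so $\tilde\omega := \tilde\Omega \cap \tilde S$ is precisely the symmetric control set required by Theorem~\ref{thm:null-controllability}. Lemma~\ref{lemma:thickness}(a) guarantees that $\tilde S$ is $(\tilde\gamma,\tilde a)$-thick with $\tilde\gamma = \gamma/2$ and $\tilde a = (2a_1, a_2, \ldots, a_d)$, so that $\|\tilde a\|_1 = 2a_1 + a_2 + \cdots + a_d$ and $K^d/\tilde\gamma = 2K^d/\gamma$. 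Substituting these values into~\eqref{bounds:control-cost-thick} for $\tilde\Omega = \RR^d$ produces the control cost estimate
\[
\tilde C_T \leq \frac{1}{\sqrt T}\left(\frac{2K^d}{\gamma}\right)^{Kd/2}\exp\left(\frac{K(2a_1+a_2+\cdots+a_d)^2\ln^2(2K^d/\gamma)}{2T}\right)
\]
for the symmetrized system, which is exactly the right-hand side of~\eqref{eq:control-cost-half-space}. Applying Theorem~\ref{thm:null-controllability} then transfers null-controllability from $\RR^d$ back to $\Omega$ and gives $C_T \leq \tilde C_T$, completing the argument.

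There is no genuine obstacle here, since every ingredient has been prepared in advance: the reflection principle in Theorem~\ref{thm:null-controllability} provides the transfer, Lemma~\ref{lemma:thickness}(a) handles the geometric step, and~\eqref{bounds:control-cost-thick} supplies the quantitative bound on the whole space. The only point requiring attention is the bookkeeping of the thickness parameters $(\tilde\gamma,\tilde a)$, which must be chosen consistently with the statement of Lemma~\ref{lemma:thickness}(a) so that the factors $2K^d/\gamma$ and $(2a_1+a_2+\cdots+a_d)^2$ emerge correctly in the final exponential; this is a direct verification.
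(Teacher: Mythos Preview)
Your proposal is correct and follows essentially the same approach as the paper: choose $\tilde S$ via Lemma~\ref{lemma:thickness}(a), apply the control cost bound~\eqref{bounds:control-cost-thick} on $\tilde\Omega=\RR^d$ with the new parameters $(\gamma/2,(2a_1,a_2,\dots,a_d))$, and then transfer to $\Omega$ via Theorem~\ref{thm:null-controllability}. Your write-up is in fact slightly more explicit about the parameter bookkeeping than the paper's version.
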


\begin{proof}
 Choose the $(\gamma/2,(2a_1,a_2\ldots,a_d))$-thick set $\tilde{S}$ as in Lemma~\ref{lemma:thickness}\,(a). Then, the heat equation~\eqref{eq:heat:tilde}
 on $\tilde{\Omega}=\RR^d$ is null-controllable in any time $T>0$ from the control set $\tilde{\omega}=\tilde{S}$. The associated bound on the
 control cost $\tilde{C}_T$ from~\eqref{bounds:control-cost-thick} now reads as in~\eqref{eq:control-cost-half-space}.
 
 Since $\tilde{\Omega}=\Omega\cup M(\Omega)\cup \Gamma$ with $\Gamma=\{0\}\times\RR^{d-1}$ and $\tilde{\omega}=\omega\cup M(\omega)$,
 the claim follows by Theorem \ref{thm:null-controllability}.
\end{proof}

\begin{prop}[Heat equation on positive orthants]\label{prop:control-positive-orthants}
 The system~\eqref{eq:heat} on $\Omega=(0,+\infty)^d$ is null-controllable in any time $T>0$ with control cost 
  \begin{equation}\label{eq:control-cost-orthants}
  C_{T} \leq \frac{1}{\sqrt{T}}\left(\frac{(2K)^d}{\gamma}\right)^{Kd/2}\exp\left(\frac{ 4 K\norm{a}{1}^2\ln^2((2K)^d/\gamma)}{2T}\right),
  \end{equation}
  where $K>0$ is the universal constant from \eqref{bounds:control-cost-thick}.
\end{prop}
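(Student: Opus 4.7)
The plan is to apply the reflection principle of Theorem~\ref{thm:null-controllability} iteratively, once for each of the $d$ coordinate hyperplanes, so as to relate the heat equation on the orthant $\Omega = (0,+\infty)^d$ to the heat equation on the full space $\RR^d$, for which the control cost bound~\eqref{bounds:control-cost-thick} is already available from~\cite{NTTV18-preprint}.

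First, I will build the fully symmetrized set $\tilde{S} = \{x \in \RR^d \colon (\abs{x_1},\ldots,\abs{x_d}) \in S\}$, which by Lemma~\ref{lemma:thickness}\,(b) is $(\gamma/2^d, 2a)$-thick and satisfies $\Omega \cap \tilde{S} = \Omega \cap S = \omega$. Applying the bound~\eqref{bounds:control-cost-thick} on $\tilde\Omega = \RR^d$ with control set $\tilde\omega = \tilde{S}$ and thickness parameters $\gamma/2^d$, $2a$ yields, after the elementary simplifications $K^d/(\gamma/2^d) = (2K)^d/\gamma$ and $\norm{2a}{1} = 2\norm{a}{1}$, a control cost $\tilde{C}_T$ equal to the right-hand side of~\eqref{eq:control-cost-orthants}.

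Next, I will descend from $\RR^d$ back to $\Omega$ along a chain $\Omega^{(0)} \subset \Omega^{(1)} \subset \cdots \subset \Omega^{(d)}$ of domains with $\Omega^{(0)} = \Omega$ and $\Omega^{(d)} = \RR^d$, where $\Omega^{(k)}$ is obtained from $\Omega^{(k-1)}$ by adjoining its reflection across the coordinate hyperplane $\{x_k = 0\}$ together with this hyperplane. Defining control sets $\omega^{(k)}$ accordingly (with $\omega^{(0)} = \omega$ and $\omega^{(d)} = \tilde{S}$), I will then invoke Theorem~\ref{thm:null-controllability}, rotated so that its reflecting hyperplane at step $k$ is $\{x_k = 0\}$, in reverse order $k = d, d-1, \ldots, 1$. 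Each application transfers null-controllability from $\Omega^{(k)}$ to $\Omega^{(k-1)}$ without increasing the control cost, so after $d$ iterations one arrives at $\Omega$ with $C_T \leq \tilde{C}_T$, which is exactly~\eqref{eq:control-cost-orthants}.

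I do not anticipate a genuine obstacle: since $V = 0$ and the Laplacian is invariant under each coordinate reflection, the symmetry hypothesis of Theorem~\ref{thm:null-controllability} holds automatically at every intermediate step, and the only piece of bookkeeping is to confirm that after all $d$ reflections one indeed recovers $\tilde{S}$, which is immediate from the definition of $\tilde{S}$ as the preimage of $S$ under the componentwise absolute value.
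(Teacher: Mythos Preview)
Your proposal is correct and follows essentially the same approach as the paper: build the fully symmetrized set $\tilde S$ via Lemma~\ref{lemma:thickness}\,(b), invoke the $\RR^d$ control cost bound~\eqref{bounds:control-cost-thick} with parameters $(\gamma/2^d,2a)$, and then apply Theorem~\ref{thm:null-controllability} successively across the $d$ coordinate hyperplanes to descend to the orthant without increasing the control cost. The only cosmetic difference is the order in which you reflect (you peel off coordinates $d,d-1,\dots,1$ whereas the paper uses $1,2,\dots,d$), which is immaterial since $\tilde S$ is symmetric with respect to every coordinate hyperplane.
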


\begin{proof}
 Choose the $(\gamma/2^d,2a)$-thick set $\tilde{S}$ as in Lemma~\ref{lemma:thickness}\,(b). Then, the heat equation~\eqref{eq:heat:tilde}
 on $\tilde{\Omega}=\RR^d$ is null-controllable in any time $T>0$ from the control set $\tilde{\omega}=\tilde{S}$.
 The associated bound on the control cost $\tilde{C}_T$ now reads as in~\eqref{eq:control-cost-orthants}.

 Since the set $\tilde{S}$ is symmetric with respect to every coordinate axis, the claim now follows by successively applying
 Theorem~\ref{thm:null-controllability} for the reflections with respect to the coordinate axes. This step by step leads to
 null-control\-lability of the system~\eqref{eq:heat} on $\Omega = (0,+\infty)^j \times \RR^{d-j}$, $j = 1,\dots,d$, with control
 set $\omega = ((0,+\infty)^j \times \RR^{d-j}) \cap \tilde{S}$. Each control cost does not exceed the one from the previous step
 and, thus, the corresponding bound reads as in~\eqref{eq:control-cost-orthants}. Taking into account that
 $(0,+\infty)^d \cap \tilde{S} = (0,+\infty)^d \cap S$, the final step $j = d$ then proves the claim.
\end{proof}

\begin{rmk}\label{rmk:dirichlet-bc}
 Control cost estimates on the half-space and the positive orthant can alternatively be obtained by means of the exhaustion approach
 studied in~\cite{SV20}. In this case, the corresponding bounds will not incorporate any change in the thickness parameters as the
 above bounds do. However, the considerations from~\cite{SV20} currently allow to apply this approach only in the case of Dirichlet
 boundary conditions.
\end{rmk}

\begin{prop}[Heat equation on a sector of angle $\pi/2^n$, $n\geq 2$]\label{prop:control-sectors}
 The system~\eqref{eq:heat} on the sector $\Omega = \{ (x,y)\in(0,+\infty)^2\,\colon\, y<(\tan(\pi/2^n))x \}$ of angle
 $\pi / 2^n$, $n\geq 2$, is null-controllable in any time $T>0$ with control cost 
 \begin{equation}\label{eq:control-cost-sector}
	C_T
	\leq 
	\frac{1}{\sqrt{T}} \Bigl( \frac{2^{3n-4}K^2}{\tilde\gamma} \Bigr)^K
	 \exp \Bigl( \frac{2^{3n-4}K\norm{\tilde a}{1}^2\ln^2(2^{3n-4}K^2/\tilde\gamma)}{2T} \Bigr),
 \end{equation}
 where
 \[
  \tilde\gamma = \frac{\gamma a_1a_2}{4(a_1^2+a_2^2)},
  \quad
  \tilde a = \textstyle(2\sqrt{a_1^2+a_2^2},2\sqrt{a_1^2+a_2^2}),
 \]
 and $K>0$ is the universal constant from \eqref{bounds:control-cost-thick}.
\end{prop}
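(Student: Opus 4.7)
The strategy is to iteratively apply Theorem~\ref{thm:null-controllability} together with Lemma~\ref{lemma:thickness}(c) in order to unfold the sector across its slanted boundary, which doubles the opening angle at each step. After $n-1$ such unfoldings we reach the quarter plane $(0,+\infty)^2$, where Proposition~\ref{prop:control-positive-orthants} provides an explicit control cost bound. Concretely, set $S^{(0)}:=S$ and, for $k=1,\ldots,n-1$, define inductively
\[
 S^{(k)}
 := (S^{(k-1)}\cap H_{\pi/2^{n-k+1}})
  \cup M_{\pi/2^{n-k+1}}\bigl(S^{(k-1)}\cap H_{\pi/2^{n-k+1}}\bigr),
\]
and let $\Omega_k$ be the sector of opening angle $\pi/2^{n-k}$, so that $\Omega_0=\Omega$ and $\Omega_{n-1}=(0,+\infty)^2$. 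Since $\Omega_{k-1}\subset H_{\pi/2^{n-k+1}}$ and $\Omega_k$ is symmetric with respect to $M_{\pi/2^{n-k+1}}$ (its angular bisector), one checks that
$\Omega_k\cap S^{(k)} = (\Omega_{k-1}\cap S^{(k-1)})\cup M_{\pi/2^{n-k+1}}(\Omega_{k-1}\cap S^{(k-1)})$. The rotated version of Theorem~\ref{thm:null-controllability} thus passes the control cost from $\Omega_{k-1}$ to $\Omega_k$, and chaining these inequalities for $k=1,\ldots,n-1$ reduces everything to bounding the control cost on the quarter plane with control set $\Omega_{n-1}\cap S^{(n-1)}$.

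I would then track the thickness parameters of $S^{(k)}$ via iterated applications of Lemma~\ref{lemma:thickness}(c). The first iteration gives that $S^{(1)}$ is $(\tilde\gamma,\tilde a)$-thick, and since the two components of $\tilde a$ coincide, every subsequent iteration reduces to the elementary recursion $\gamma_k=\gamma_{k-1}/8$ and $a^{(k)}_j=2\sqrt{2}\,a^{(k-1)}_j$. Solving it yields
\[
 \gamma_{n-1}=\frac{\tilde\gamma}{8^{n-2}},
 \qquad
 a^{(n-1)}_1=a^{(n-1)}_2=2^{(3n-4)/2}\sqrt{a_1^2+a_2^2}.
\]

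Finally, Proposition~\ref{prop:control-positive-orthants} applied with $d=2$ to the $(\gamma_{n-1},a^{(n-1)})$-thick set $S^{(n-1)}$ gives
\[
 C_T\leq
 \frac{1}{\sqrt{T}}\Bigl(\frac{4K^2}{\gamma_{n-1}}\Bigr)^K
 \exp\Bigl(\frac{4K\,\norm{a^{(n-1)}}{1}^2\ln^2(4K^2/\gamma_{n-1})}{2T}\Bigr),
\]
and a short computation shows $4K^2/\gamma_{n-1}=2^{3n-4}K^2/\tilde\gamma$ and $4K\,\norm{a^{(n-1)}}{1}^2=2^{3n}K(a_1^2+a_2^2)=2^{3n-4}K\,\norm{\tilde a}{1}^2$, so the bound assumes the form stated in the proposition. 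The argument is essentially bookkeeping; the only subtlety is the verification at each step that $\Omega_k$ and the inductively symmetrized set $S^{(k)}$ are in the form required by Theorem~\ref{thm:null-controllability}, which rests on the elementary inclusion $\Omega_{k-1}\subset H_{\pi/2^{n-k+1}}$.
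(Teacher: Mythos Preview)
Your proof is correct and follows essentially the same approach as the paper: both arguments repeatedly apply Theorem~\ref{thm:null-controllability} with the reflection $M_{\pi/2^{k}}$ and Lemma~\ref{lemma:thickness}(c), reducing to Proposition~\ref{prop:control-positive-orthants} on the quarter plane, and both exploit the same recursion $\tilde\gamma\mapsto\tilde\gamma/8$, $\tilde a\mapsto 2\sqrt{2}\,\tilde a$ once the two components of $\tilde a$ coincide. The paper phrases this as a formal induction on $n$ (establishing the bound for the sector of angle $\pi/2^n$ from the one for $\pi/2^{n-1}$), whereas you fix $n$ and unfold explicitly up to the orthant; the content is the same.
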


\begin{proof}
 We prove the claim by induction on $n$. For $n=2$, we choose the $(\tilde\gamma,\tilde a)$-thick set $\tilde{S}$ as in
 Lemma~\ref{lemma:thickness}\,(c) with $d=2$ and $\theta = \pi/4$. Then, the heat equation~\eqref{eq:heat:tilde} on the orthant
 $\tilde{\Omega}=(0,+\infty)^2$ is null-controllable in any time $T>0$, and the bound on the associated control cost from
 Proposition~\ref{prop:control-positive-orthants} reads as in~\eqref{eq:control-cost-sector} for $n=2$. The claim for $n=2$ then
 follows by applying Theorem~\ref{thm:null-controllability} with respect to the reflection $M_{\pi/4}$.
 
 Now, suppose that the claim holds for some $n \ge 2$. Choose the $(\tilde\gamma,\tilde a)$-thick set $\tilde S$ as in
 Lemma~\ref{lemma:thickness}\,(c) with $d=2$ and $\theta=\pi/2^{n+1}$. Since $\tilde a = (\tilde a_1,\tilde a_2)$ is a multiple of
 $(1,1)$, we observe that
 \[
  \frac{\tilde\gamma\tilde a_1 \tilde a_2}{4({\tilde a_1}^2+{\tilde a_2}^2)} = \frac{\tilde\gamma}{8}
  \quad\text{ and }\quad
  \textstyle(2\sqrt{{\tilde a_1}^2 + {\tilde a_2}^2},2\sqrt{{\tilde a_1}^2 + {\tilde a_2}^2}) = 2\sqrt{2}\tilde a.
 \]
 Thus, by the induction hypothesis, the system~\eqref{eq:heat:tilde} with $\tilde\Omega$ being the sector of angle $\pi/2^n$ is
 null-controllable in any time $T$, and the corresponding bound on the control cost reads as in~\eqref{eq:control-cost-sector}, but
 with $\tilde\gamma$ and $\tilde a$ replaced by $\tilde\gamma/8$ and $2\sqrt{2}\tilde a$, respectively. The claim for $\Omega$ the
 sector of angle $\pi/2^{n+1}$ therefore follows by applying Theorem~\ref{thm:null-controllability} with respect to the reflection
 $M_{\pi/2^{n+1}}$. This concludes the proof.
\end{proof}

For the last proposition of this subsection, we suppose that the thickness parameter $a=(a_1,\dots,a_d)$ additionally satisfies
$2\sqrt{a_1^2+a_2^2}\leq L$ and $a_j\leq L$ for $j\in\{3,\ldots, d\}$.

\begin{prop}[Heat equation on triangles and triangular prisms]\label{prop:heat-triangle-prism}
  Let
  \[
  \cT_L:=\{(x, y)\in\Lambda_L^2\,\colon\, y< x\}
  \quad\text{ and }\quad
  \cP_L:=\cT_L\times (0,L).
  \] 
  
  Then, the system~\eqref{eq:heat} on $\Omega\in\{\cT_L,\cP_L\}$ is null-controllable in any time $T>0$ with control cost
  \begin{equation}\label{eq:control-cost-triangle}
  C_{T}\leq 
  \frac{1}{\sqrt{T}}\left(\frac{K^d }{\tilde\gamma}\right)^{Kd/2}\exp\left(\frac{K\norm{\tilde a}{1}^2\ln^2(K^d/\tilde\gamma)}{2T}\right),
  \end{equation}
  where $\tilde\gamma$ and $\tilde a$ are as in part~(c) of Lemma~\ref{lemma:thickness} and $K>0$ is the universal constant
  from~\eqref{bounds:control-cost-thick}. Here, $d=2$ if $\Omega=\cT_L$ and $d=3$ if $\Omega=\cP_L$.
\end{prop}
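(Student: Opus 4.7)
The plan is to exploit the fact that both $\cT_L$ and $\cP_L$ arise as the ``half'' of the cube $\Lambda_L^d$ (with $d=2$ or $d=3$) obtained by cutting along the diagonal hyperplane $\{x_1 = x_2\} = \partial H_{\pi/4}$. Since the null-controllability of the heat equation on the cube is already known with the bound \eqref{bounds:control-cost-thick}, it suffices to construct a suitably symmetric thick set $\tilde S$ and apply Theorem~\ref{thm:null-controllability} with respect to the reflection $M_{\pi/4}$ in the first two coordinates (after a rotation, so that the hyperplane of reflection becomes $\{0\}\times\RR^{d-1}$; the Laplacian is rotation invariant, and $V=0$ here, so the hypotheses of the theorem are undisturbed).

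More precisely, starting from the given $(\gamma,a)$-thick set $S\subset\RR^d$, I would set $S' = S \cap H_{\pi/4}$ and, following Lemma~\ref{lemma:thickness}\,(c), define $\tilde S = S' \cup M_{\pi/4}(S')$. By that lemma, $\tilde S$ is $(\tilde\gamma,\tilde a)$-thick with the parameters stated in the proposition. Crucially, the standing assumption $2\sqrt{a_1^2+a_2^2}\leq L$ together with $a_j\leq L$ for $j\geq 3$ ensures that $\tilde a_j \leq L$ for every $j=1,\dots,d$, so that $\tilde S$ qualifies as an admissible thick set for the cube result on $\tilde\Omega = \Lambda_L^d$.

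Next I would apply the cube estimate from~\cite{NTTV18-preprint} recalled in~\eqref{bounds:control-cost-thick}: the heat equation~\eqref{eq:heat:tilde} on $\tilde\Omega = \Lambda_L^d$ with control set $\tilde\omega = \Lambda_L^d \cap \tilde S$ is null-controllable in every time $T>0$ with control cost satisfying exactly the bound~\eqref{eq:control-cost-triangle}. By construction $\tilde S$ is symmetric under $M_{\pi/4}$, and $\cT_L \subset H_{\pi/4}$ (respectively $\cP_L \subset H_{\pi/4}\times\RR$), so $\cT_L \cap \tilde S = \cT_L \cap S = \omega$ (and analogously for $\cP_L$); equivalently, $\tilde\omega = \omega \cup M_{\pi/4}(\omega)$. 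Moreover $\tilde\Omega = \Omega \cup M_{\pi/4}(\Omega) \cup \Gamma$, where $\Gamma$ is the (nonempty) intersection of $\tilde\Omega$ with the diagonal hyperplane, so the geometric setup of Theorem~\ref{thm:null-controllability} is satisfied.

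Applying Theorem~\ref{thm:null-controllability} (with the rotation turning $\partial H_{\pi/4}$ into $\{0\}\times\RR^{d-1}$) then transfers null-controllability from $\tilde\Omega$ to $\Omega\in\{\cT_L,\cP_L\}$ with control cost not exceeding $\tilde C_T$, which yields~\eqref{eq:control-cost-triangle}. The only subtle point I anticipate is the bookkeeping that ensures $\tilde S$ is thick with parameters compatible with the cube result (this is precisely why the hypothesis $2\sqrt{a_1^2+a_2^2}\le L$ is imposed); the rest of the argument is a direct concatenation of Lemma~\ref{lemma:thickness}\,(c), the cube bound~\eqref{bounds:control-cost-thick}, and Theorem~\ref{thm:null-controllability}, so I expect no further obstacle.
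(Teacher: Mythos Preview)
Your proposal is correct and follows essentially the same route as the paper: symmetrize $S$ via Lemma~\ref{lemma:thickness}\,(c) with $\theta=\pi/4$, invoke the cube bound~\eqref{bounds:control-cost-thick} on $\tilde\Omega=\Lambda_L^d$ (using the standing hypothesis $2\sqrt{a_1^2+a_2^2}\le L$, $a_j\le L$ for $j\ge 3$, to make $\tilde a$ admissible), and then apply Theorem~\ref{thm:null-controllability} with respect to $M_{\pi/4}$. The paper phrases this as ``analogous to the case $n=2$ in Proposition~\ref{prop:control-sectors} with $\tilde\Omega=\Lambda_L^d$,'' which is precisely what you spell out.
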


\begin{proof}
 This is proved analogously to the case $n=2$ in Proposition~\ref{prop:control-sectors} with $\tilde\Omega = \Lambda_L^d$ and the
 corresponding control cost bound from~\eqref{bounds:control-cost-thick}.
\end{proof}

\begin{rmk}\label{rmk:product-spaces}
In the recent work~\cite{Egi18}, it has been shown that the system~\eqref{eq:heat} on the strip
$\Omega=\Lambda_L^{d-1}\times\RR$ is null-controllable in any time $T>0$ if and only if $S$ is a thick set 
(which can be arbitrarily changed outside the strip), and an explicit control cost bound has been provided. 

With similar arguments as in the propositions
above, one can infer null-controllability results for system~\eqref{eq:heat} on the product spaces 
$\Lambda_L^{d-1}\times (0,+\infty)$, $\cT_L\times\RR$, and $\cT_L\times (0,+\infty)$ by means of Theorem~\ref{thm:null-controllability}. 
\end{rmk}

\begin{rmk}\label{rmk:fractionalLaplacian-BanachSpaces}
 (a) Null-controllability results and corresponding bounds on the control cost for fractional heat equations, that is,
 system~\eqref{eq:heat:tilde} with $(-\Delta_{\tilde\Omega}^\bullet)^\theta$, $\theta>\frac{1}{2}$, instead of
 $-\Delta_{\tilde\Omega}^\bullet$, have also been investigated in~\cite[Theorem~4.10]{NTTV18-preprint} on
 $\tilde{\Omega}\in\{\RR^d,\Lambda_L^d\}$ with a control cost bound of the form~\eqref{bounds:control-cost-thick} but with
 $\norm{a}{1}^2\ln^2(K^d/\gamma)/T$ replaced by $(\norm{a}{1}\ln(K^d/\gamma))^{2\theta/(2\theta-1)}/T^{1/(2\theta-1)}$.
 
 Although fractional Laplacians are formally not of divergence form as the operators considered in
 Theorem~\ref{thm:null-controllability}, the more general considerations in Section~\ref{sec:abstract-result} below nevertheless
 allow us to obtain corresponding results on the domains considered in
 Propositions~\ref{prop:control-half-spaces}--\ref{prop:heat-triangle-prism}, cf.~Remark~\ref{rmk:modification}\,(a) below.

 (b) Recent developments on null-controllability for parabolic equations in Banach spaces~\cite{BGST20,GST19} together with the
 abstract considerations in Section~\ref{sec:abstract-result} also open the way towards similar results for certain strongly
 elliptic differential operators with constant coefficients on $L^p$, $1<p<\infty$; a bound on the associated control cost on
 $\RR^d$ of a form close to~\eqref{bounds:control-cost-thick} is given in~\cite[Theorem~3.6]{BGST20}, see
 also~\cite[Corollary~4.6]{GST19}. We briefly demonstrate how to infer a corresponding result for the $L^p$-Laplacian on the
 half-space $(0,+\infty) \times \RR^{d-1}$ in Remark~\ref{rmk:modification}\,(b) below.
\end{rmk}

\subsection{Null-controllability from equidistributed sets}\label{subsec:equidistributed}

Let $V\in L^\infty(\Omega)$ be real-valued, and consider the heat-like system
\begin{equation}\label{eq:heat-like} 
 \partial_t u(t) + (- \Delta^\bullet_\Omega+ V) u(t) = \chi_{\omega}v(t) \quad\text{ for }\quad 0<t<T,\quad u(0)=u_0,
\end{equation}
with $u_0\in L^2(\Omega)$ and $v\in L^2((0,T),L^2(\Omega))$. Moreover, let the control set $\omega$ be of the form
$\omega=\Omega\cap S$ with some $S\subset\RR^d$ that is $(G,\delta)$-equidistributed in the sense of the following definition.

\begin{defin}\label{defin:equidistributed_set}
 Let $G>0$ and $\delta\in (0,G/2)$. A measurable set $S\subset\RR^d$ is called $(G,\delta)$-\emph{equidistributed} if 
\[
 S \supset \bigcup_{j\in\ZZ^d}  B(z_j, \delta)
\]
for some sequence $(z_j)_{j\in\ZZ^d}\subset\RR^d$ with $B(z_j,\delta)\subset \left(Gj+\Lambda_G^d \right)$ for all $j\in\ZZ^d$,
where $B(z_j,\delta)$ denotes the open ball in $\RR^d$ of radius $\delta$ around $z_j$.
\end{defin}

It is shown in~\cite{NTTV18-preprint} that system \eqref{eq:heat-like} is null-controllable in any time $T>0$ on domains of the
form $\Omega = \bigtimes_{j=1}^d (a_j,b_j)$ with $a_j,b_j\in\RR \cup \{\pm\infty\}$, $a_j < b_j$, such that
$\Lambda_G \subset \Omega$. A corresponding bound on the control cost is given by
\begin{equation}\label{bounds:control-cost-equidistributed}
 C_T
 \leq
 \left( \frac{\delta}{G} \right)^{-D\cdot(1+G^{4/3}\norm{V}{\infty}^{2/3})}\frac{D}{\sqrt{T}}
 \exp\left( \frac{DG^2\ln^2(\delta/G)}{2T} + \norm{V_-}{\infty}T \right),
\end{equation}
where $V_- = \max\{-V,0\}$ denotes the negative part of $V$ and $D=D(d)$ depends only on the dimension,
cf.~\cite[Theorem~4.11]{NTTV18-preprint} and~\cite[Proposition~2.4 and Corollary~2.5]{SV20}.

Below we extend the above result to sectors of angle $\pi/2^n$, $n\geq 2$, isosceles right-angled triangles, and corresponding prisms. 
A treatment of some related product spaces would also be possible, but we will not do this here for simplicity. 
In the framework of the main theorem, let
$\tilde\Omega\in\{\Lambda_L^d, (0,+\infty)^d\}$, $L\ge 2G$, and $\tilde\omega=\tilde\Omega\cap \tilde{S}$ with some
$(2G, \delta)$-equidistributed set for $\tilde{S}$. As in the previous subsection, we use the above mentioned results for the system 
\begin{equation}\label{eq:heat-like-tilde} 
 \partial_t \tilde{u}(t) + (- \Delta^\bullet_{\tilde\Omega}+ \tilde{V}) \tilde{u}(t) = \chi_{\tilde\omega}\tilde{v}(t) \quad\text{ for }\quad
 0<t<T,\quad \tilde{u}(0)=\tilde{u}_0,
\end{equation}
with $\tilde{u}_0\in L^2(\tilde\Omega)$ and $\tilde{v}\in L^2((0,T),L^2(\tilde\Omega))$ to infer null-controllability and related
control cost bounds on the desired domains. 

Analogously to Lemma~\ref{lemma:thickness} in the case of thick sets, we first need the following geometric consideration.

\begin{lemma}\label{lemma:equidistributed}
	Let $S\subset\RR^d$ be $(G,\delta)$-equidistributed, and let $S'=S \cap H_{\theta}$. Then, the set
	$\tilde{S}=S'\cup M_{\theta}(S')$ is $(4G,\delta)$-equidistributed.
\end{lemma}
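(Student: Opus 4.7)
The strategy mirrors the proof of Lemma~\ref{lemma:thickness}\,(c). For each $4G$-cube $Q_k:=4Gk+\Lambda_{4G}^d$ I would produce $\tilde z_k\in Q_k$ with $B(\tilde z_k,\delta)\subset\tilde S\cap Q_k$ by locating an equidistributed center $z_j$ of $S$ for which either $B(z_j,\delta)\subset Q_k\cap\overline{H_\theta}$---in which case I set $\tilde z_k:=z_j$, so that the ball lies in $S\cap H_\theta=S'\subset\tilde S$---or $B(z_j,\delta)\subset M_\theta(Q_k)\cap\overline{H_\theta}$---in which case $\tilde z_k:=M_\theta(z_j)$ works, since $M_\theta(B(z_j,\delta))\subset Q_k$ belongs to $M_\theta(S')\subset\tilde S$. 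Because $B(z_j,\delta)$ sits inside the $G$-grid cell $C_j=Gj+\Lambda_G^d$, the task reduces to finding an axis-aligned $G$-grid cell $C_j$ contained in $Q_k\cap\overline{H_\theta}$ or in $M_\theta(Q_k)\cap\overline{H_\theta}$.

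Since $M_\theta$ acts nontrivially only on the first two coordinates, the problem is essentially $2$-dimensional: in the remaining $d-2$ coordinates both $Q_k$ and $M_\theta(Q_k)$ project onto the same $4G$-cube, which trivially contains many $G$-grid cells, so it suffices to find a $2$-dimensional axis-aligned $G$-grid cell inside $P_k\cap\overline{H_\theta}$ or inside $M_\theta(P_k)\cap\overline{H_\theta}$, where $P_k$ denotes the projection of $Q_k$ onto the $(x_1,x_2)$-plane and I abuse notation to write $H_\theta$ also for the corresponding $2$-dimensional half-plane. Two of the three resulting cases are short. If $P_k\subset\overline{H_\theta}$, any of its sixteen $G$-grid cells works. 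If $P_k$ lies entirely on the opposite side, then $M_\theta(P_k)\subset\overline{H_\theta}$ is a rotated $4G$-square containing an inscribed disk of radius $2G$, hence an inscribed axis-aligned square of side $2\sqrt 2\,G>2G$; this square contains an axis-aligned $G$-grid cell by the standard shifting argument (any axis-aligned $2G$-square contains one). The inequality $2\sqrt 2\,G>2G$ is precisely what forces the target scale to be $4G$ rather than a smaller multiple of $G$.

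The main obstacle is the remaining case in which $\partial H_\theta$ cuts through the interior of $P_k$. Here the key observation is that
\[
 |P_k\cap\overline{H_\theta}|+|M_\theta(P_k)\cap\overline{H_\theta}|=|P_k|=16G^2,
\]
so one of these convex regions has area at least $8G^2$. I would then argue that such a large convex subset of the (axis-aligned or rotated) $4G$-square must contain an axis-aligned $G$-grid cell: its two coordinate-axis projections are intervals of length at least $2G$, so there is enough room in each direction, and the remaining verification can be carried out by a short geometric case distinction according to whether $\partial H_\theta$ meets opposite or adjacent sides of $P_k$. Combining such a $2$-dimensional cell with any grid cell in the remaining $d-2$ coordinates then produces the desired $\tilde z_k$.
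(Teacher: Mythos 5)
Your reduction is the right one and matches the paper's: it suffices to find a cell of the lattice $G\ZZ^d$ contained in $Q_k\cap\overline{H_\theta}$ or in $M_\theta(Q_k)\cap\overline{H_\theta}$, and since $M_\theta$ only acts in the $(x_1,x_2)$-coordinates the problem is two-dimensional; your first two cases are also fine. The gap is in your main case, where $\partial H_\theta$ cuts $P_k$: the assertion that the piece of area at least $8G^2$ must contain an axis-aligned $G$-grid cell is exactly what needs proof, and your sketch does not supply it. Concretely, take $\theta=\pi/8$, so that $M_\theta(P_k)$ is a $4G$-square rotated by $\pi/4$, and let $\partial H_\theta$ pass through (or just below) the centre of $P_k$, so that the upper part has area $\geq 8G^2$ and your rule selects the reflected piece $M_\theta(P_k)\cap\overline{H_\theta}$, i.e.\ essentially half of a diamond cut by a line of slope $\tan(\pi/8)\approx 0.414$ through its centre. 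A short computation shows that the largest axis-aligned square contained in this region has side roughly $1.66\,G$; in particular it contains no axis-aligned square of side $2G$, so the ``standard shifting argument'' is unavailable, and whether the region contains a lattice cell now depends on its position modulo $G\ZZ^2$, which your argument does not control (the centre of $P_k$ is reflected across a line of irrational slope, so this position is essentially arbitrary). Note also that area $\geq 8G^2$ inside a rotated $4G$-square only forces coordinate projections of length $\geq\sqrt{2}\,G$, not $2G$, so even that preliminary claim fails for the reflected piece. Worse, the area dichotomy can point at the wrong piece: in the configuration just described it is the \emph{lower} piece $P_k\cap\overline{H_\theta}$, whose area may be below $8G^2$, that contains a lattice cell (for slope $\approx 0.414$ and the line through the centre, the bottom-row cell in the rightmost column lies below the line).

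The paper's proof avoids this by a dichotomy that carries the needed quantitative margin instead of an area count: for each $4G$-cell, either some $G$-lattice cell already lies in the closed lower part, or else (since then the line must stay within distance less than $G$ of the bottom of the cell over most of its width, which also forces the slope to be small in the cell's frame) an entire $3G$-cube lies in the closed upper part. In the second case the $3G$-cube contains a ball of radius $\sqrt{2}\,G$ (a cylinder over such a $2$-dimensional ball when $d\geq 3$), which is precisely the circumscribed ball of an axis-aligned $2G$-square; hence it contains the reflection $M_\theta(Q)$ of an axis-aligned $2G$-cube $Q\subset H_\theta$, and a $2G$-cube contains a $G$-lattice cell by the shifting argument, position-independently. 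The side length $2G$ is exactly the threshold that makes this last step translation-proof; ``at least half the area'' does not provide it. To repair your proof you would have to replace the area criterion in the mixed case by an argument of this kind, or else genuinely control the position of the reflected piece relative to the lattice, which your sketch does not attempt.
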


\begin{proof}
We need to show that for every cube $\Lambda_j:=4Gj+\Lambda_{4G}^d$, $j\in\ZZ^d$, there exists $j'\in\ZZ^d$ such that
$Gj'+\Lambda_{G}^d\subset \Lambda_j\cap\overline{ H_\theta}$ or 
$M_{\theta}(Gj'+\Lambda_{G}^d)\subset \Lambda_j\cap(\RR^d\setminus H_\theta)$.

To this end, we observe that for every $j\in\ZZ^d$ there exists $k\in\ZZ^d$ such that  
$Gk+\Lambda_G^d \subset \Lambda_j\cap \overline{H_\theta}$ or $3Gk+\Lambda_{3G}^d\subset\Lambda_j\cap (\RR^d\setminus H_\theta)$,
cf.~Figure~\ref{fig:equidistributed}~(a). 
In the first case, we are done with $j'=k$. 
In the second case, the cube $3Gk+\Lambda_{3G}^d$ contains a ball of radius $\sqrt{2}G$, which, in turn, contains the reflection of
a cube $Q\subset H_\theta$ with sides of length $2G$ parallel to coordinate axes, cf.~Figure \ref{fig:equidistributed}~(b). 
This cube $Q$ must contain at least one cube of
the form $Gj'+\Lambda_G^d$ with $j'\in\ZZ^d$, for which
$M_\theta(Gj'+\Lambda_G^d) \subset 3Gk+\Lambda_{3G}^d\subset\Lambda_j\cap (\RR^d\setminus H_\theta)$. This concludes the proof.
\end{proof}
 
\begin{figure}[htb]\label{fig:equidistributed}
\centering
\begin{tikzpicture}[scale=0.5]
\begin{scope}[xshift=-12.5cm]
\node at (-2,6) {$(a)$};
\draw[->] (0,-1.5) -- (0,5.5); \node at (0.2, 5.7) {\tiny$x_2$};
\draw[->] (-1.5,0) -- (6,0); \node at (6, -0.4) {\tiny$x_1$}; 
\draw (-2,-2/3) -- (6,2); 
\draw (-2, -1/3) -- (6,1); 
\draw(-2,-4/3) -- (6,4); 
\foreach \x in {-1,0,1,2,3,4}{
	\foreach \y in {-1,0,1,2,3,4}{
		\draw[dashed] (\x,-1.5) -- (\x, 4.5); 
		\draw[dashed] (-1.5, \y) -- (4.5, \y);
	}
}

\draw[<->] (-0.3, 3) -- (-0.3,4); \node at (-0.6,3.5) {\small$G$};

\foreach \x in {0,4}{
	\foreach \y in {0,4}{
		\draw[thick] (\x,-1.5) -- (\x, 4.5); 
		\draw[thick] (-1.5, \y) -- (4.5, \y);
	}
}
\draw[thick, red] (0,4) rectangle (3,1);
\draw[thick, red] (3,1) rectangle (4,0);
\end{scope}

\begin{scope}
\node at (-5,6) {$(b)$};
\draw[->] (0,-5) -- (0,5.5); \node at (0.2, 5.7) {\tiny$x_2$};
\draw[->] (-5,0) -- (5.5,0); \node at (5.5, -0.4) {\tiny$x_1$}; 
\draw (-5,-5/2) -- (6,3); 

\foreach \x in {-4,-3,-2,-1,0,1,2,3,4}{
	\foreach \y in {-4,-3,-2,-1,0,1,2,3,4}{
		\draw[dashed] (\x,-5) -- (\x, 5); 
		\draw[dashed] (-5, \y) -- (5, \y);
	}
}

\draw[<->] (0,4.6) --(1,4.6); \node at (0.5,4.9) {\small$G$}; 
\draw[<->] (0,4.2) --(4,4.2); \node at (2.5,4.5) {\small$4G$}; 
\foreach \x in {-4,0,4}{
	\foreach \y in {-4,0,4}{
		\draw[thick] (\x,-5) -- (\x, 5); 
		\draw[thick] (-5, \y) -- (5, \y);
	}
}

\pgfmathsetmacro{\X}{-0.6};
\pgfmathsetmacro{\Y}{-1.1};
\pgfmathsetmacro{\XX}{\X+2};
\pgfmathsetmacro{\YY}{\Y-2};
\draw[red] (\X,\Y) rectangle (\XX, \YY); 
\node[red] at (\XX+0.4, (\Y-0.5) {\small$Q$};

\draw[red] (\X*3/5+\Y*4/5, \X*4/5 - \Y*3/5) -- (\XX*3/5+\Y*4/5, \XX*4/5 - \Y*3/5) --(\XX*3/5+\YY*4/5, \XX*4/5 - \YY*3/5) -- (\X*3/5+\YY*4/5, \X*4/5 - \YY*3/5) -- (\X*3/5+\Y*4/5, \X*4/5 - \Y*3/5);

\draw (\X*3/5+3/5+\Y*4/5-4/5, \X*4/5 +4/5 - \Y*3/5+3/5) circle (1.41cm);
\draw[thick,blue] (-3,3) rectangle (0,0); 
\draw[<->, blue] (-3,3.3) -- (0,3.3); \node[blue] at (-1.5, 3.6) {\small$3G$};
\end{scope}

\end{tikzpicture}
\caption{(a) For every reflection hyperplane cutting a $4G$-cell, there is a $G$-subcell in the lower or a $3G$-subcell in the upper
half-space. (b) The blue $3G$-subcell contains the reflection of the red cube $Q$.}
\end{figure}
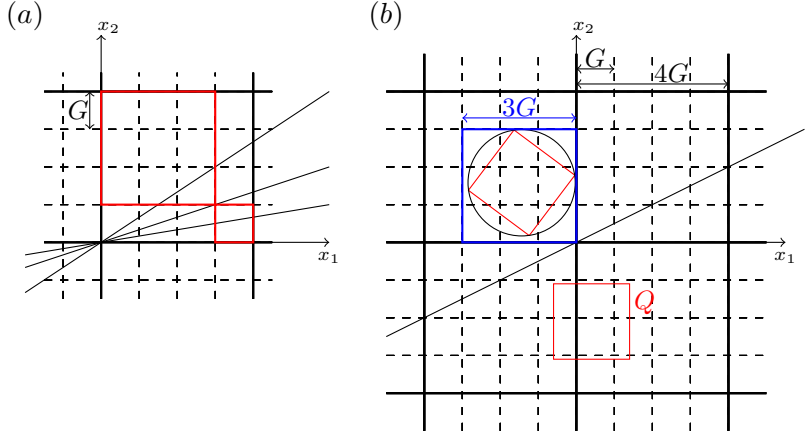

\begin{rmk}\label{rmk:equidistributed}
 If $\theta=\pi/4$, it is possible to slightly strengthen the above lemma. Indeed, in this case the resulting set $\tilde S$ is
 $(2G,\delta)$-equidistributed since the reflection $M_{\pi/4}$ maps cells of the lattice $(G\ZZ)^d$ in $\overline{H_{\pi/4}}$ to
 cells of the lattice in $\RR^d\setminus H_{\pi/4}$ and vice versa, and each cube $2Gj+\Lambda_{2G}^d$, $j\in \ZZ^d$, contains at
 least one cell of the lattice $(G\ZZ)^d$ that belongs either to $\overline{H_{\pi/4}}$ or to $\RR^d\setminus H_{\pi/4}$. However,
 for the sake of simplicity we opted for a unified statement valid for every reflection $M_\theta$.
\end{rmk}

\begin{prop}[Heat-like equation on a sector of angle $\pi/2^n$, $n\geq 2$]\label{prop:heat-eq-sector-equidistributed}
 The system~\eqref{eq:heat-like} on the sector $\Omega=\{(x,y)\in(0,+\infty)^2\,\colon\, y< (\tan(\pi/2^n))x\}$ of angle $\pi/2^n$,
 $n\geq 2$, is null-controllable in any time $T>0$ with control cost
 \begin{equation}\label{eq:control-cost-sector-equidistributed}
  C_T
  \leq
  \left(\frac{\delta}{\tilde G}\right)^{-D\cdot(1+{\tilde G}^{4/3}\norm{V}{\infty}^{2/3})}
  \frac{D}{\sqrt{T}}
  \exp\left(\frac{D{\tilde G}^2\ln^2(\delta/{\tilde G})}{2T}+\norm{V_-}{\infty}T\right),
 \end{equation}
 where $\tilde G = 4^{n-1}G$ and $D=D(2)$ is the constant from~\eqref{bounds:control-cost-equidistributed} for $d=2$.
\end{prop}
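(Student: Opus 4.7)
The plan is to proceed by induction on $n$, exactly mirroring the argument of Proposition~\ref{prop:control-sectors} but substituting Lemma~\ref{lemma:equidistributed} for Lemma~\ref{lemma:thickness}\,(c) and the equidistributed-set control cost bound~\eqref{bounds:control-cost-equidistributed} for the thick-set bound~\eqref{bounds:control-cost-thick}. The only new ingredient beyond the parallel proof is that each symmetrization also extends the potential $V$ symmetrically, but this preserves both $\norm{V}{\infty}$ and $\norm{V_-}{\infty}$, so the cost bound is propagated cleanly.

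For the base case $n=2$, set $S' = S \cap H_{\pi/4}$ and $\tilde S = S' \cup M_{\pi/4}(S')$. By Lemma~\ref{lemma:equidistributed} with $\theta=\pi/4$, this $\tilde S$ is $(4G,\delta)$-equidistributed and symmetric with respect to $M_{\pi/4}$. Choose $\tilde\Omega = (0,+\infty)^2$, which is a product of half-lines containing $\Lambda_{4G}^2$, and let $\tilde V$ be the symmetric extension of $V$ across $\partial H_{\pi/4}$. The bound~\eqref{bounds:control-cost-equidistributed} applied with parameter $4G$ gives null-controllability of~\eqref{eq:heat-like-tilde} on $\tilde\Omega$ with the cost~\eqref{eq:control-cost-sector-equidistributed} for $\tilde G = 4G = 4^{2-1}G$. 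A rotated version of Theorem~\ref{thm:null-controllability} applied to the reflection $M_{\pi/4}$ then transfers this to null-controllability on $\Omega$ with no increase in cost.

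For the inductive step, assume the claim for some $n \ge 2$ and let $\Omega$ be the sector of angle $\pi/2^{n+1}$. Set $S' = S \cap H_{\pi/2^{n+1}}$ and $\tilde S = S' \cup M_{\pi/2^{n+1}}(S')$, which by Lemma~\ref{lemma:equidistributed} is $(4G,\delta)$-equidistributed and symmetric with respect to $M_{\pi/2^{n+1}}$. The symmetrized ambient domain $\tilde\Omega = \Omega \cup M_{\pi/2^{n+1}}(\Omega) \cup (\partial H_{\pi/2^{n+1}} \cap \tilde\Omega)$ is precisely the sector of angle $\pi/2^n$. Applying the induction hypothesis on $\tilde\Omega$ to the data $(\tilde S, \tilde V)$ with $G$ replaced by $4G$, one obtains null-controllability with cost bounded by~\eqref{eq:control-cost-sector-equidistributed}, in which the parameter reads $4^{n-1} \cdot 4G = 4^n G = 4^{(n+1)-1}G$, matching the required $\tilde G$ at level $n+1$. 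A final application of Theorem~\ref{thm:null-controllability} with respect to $M_{\pi/2^{n+1}}$ transfers the result to $\Omega$ at no cost.

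The only real checks, and hence the mildest obstacles, are geometric and notational: verifying that reflection of the sector of angle $\pi/2^{n+1}$ across its upper edge yields exactly the sector of angle $\pi/2^n$ (so the ambient domains nest correctly), and tracking how the multiplicative factor $4$ in Lemma~\ref{lemma:equidistributed} compounds to produce the announced $\tilde G = 4^{n-1}G$ after the reflection chain is completed back down to the orthant in the base case. No new analytic estimates are required.
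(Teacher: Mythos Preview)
Your proposal is correct and follows precisely the route the paper takes: induction on $n$ exactly as in Proposition~\ref{prop:control-sectors}, with Lemma~\ref{lemma:equidistributed} replacing Lemma~\ref{lemma:thickness}\,(c) and the bound~\eqref{bounds:control-cost-equidistributed} on the orthant $(0,+\infty)^2$ serving as the base case. Your explicit tracking of how the factor~$4$ from Lemma~\ref{lemma:equidistributed} compounds to $\tilde G = 4^{n-1}G$, and the remark that the symmetric extension of $V$ preserves $\norm{V}{\infty}$ and $\norm{V_-}{\infty}$, are both correct and make the argument self-contained.
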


\begin{proof}
 This is proved by induction on $n$ analogously to Proposition~\ref{prop:control-sectors}, the only difference being the use of
 Lemma~\ref{lemma:equidistributed} instead of Lemma~\ref{lemma:thickness}\,(c).
\end{proof}

\begin{prop}[Heat-like equation on triangles and triangular prisms]\label{prop:heat-triangle-prism-2}
Let $L\ge 2G$, and let $\cT_L$ and $\cP_L$ as in Proposition~\ref{prop:heat-triangle-prism}. 
Then, the system~\eqref{eq:heat-like} on $\Omega\in\{\cT_L,\cP_L\}$ is null-controllable in any time $T>0$ with control cost
\[
 C_{T}
 \leq
 \left(\frac{\delta}{2G}\right)^{-R\cdot(1+(2G)^{4/3}\norm{V}{\infty}^{2/3})}\frac{R}{\sqrt{T}}
 \exp\left(\frac{R(2G)^2\ln^2(\delta/(2G))}{2T}+\norm{V_-}{\infty}T\right),
\]
where $R=\max\{D(2),D(3)\}$ with $D(d)$ from~\eqref{bounds:control-cost-equidistributed}.
\end{prop}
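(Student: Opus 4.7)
The plan is to follow the same scheme as in Proposition~\ref{prop:heat-triangle-prism}, with Lemma~\ref{lemma:equidistributed} (in the strengthened form of Remark~\ref{rmk:equidistributed}) replacing Lemma~\ref{lemma:thickness}\,(c), and the control cost bound~\eqref{bounds:control-cost-equidistributed} on cubes replacing~\eqref{bounds:control-cost-thick}.

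First I would take $\tilde\Omega = \Lambda_L^d$, where $d=2$ for $\Omega=\cT_L$ and $d=3$ for $\Omega=\cP_L$, viewing the reflection $M_{\pi/4}$ as acting on $\RR^d$ by leaving the last $d-2$ coordinates fixed. Setting $S'=S\cap H_{\pi/4}$ and $\tilde S = S'\cup M_{\pi/4}(S')$, Lemma~\ref{lemma:equidistributed} combined with Remark~\ref{rmk:equidistributed} shows that $\tilde S$ is $(2G,\delta)$-equidistributed in $\RR^d$. By construction, $\tilde\omega := \tilde\Omega \cap \tilde S$ coincides with $\omega\cup M_{\pi/4}(\omega)$ and serves as a legitimate equidistributed-type control set in the cube $\Lambda_L^d$.

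Next, I would extend the potential $V$ symmetrically to a potential $\tilde V$ on $\tilde\Omega$ as prescribed in the construction preceding Theorem~\ref{thm:null-controllability}. Since $M_{\pi/4}$ is an isometry, $\norm{\tilde V}{\infty}=\norm{V}{\infty}$ and $\norm{\tilde V_-}{\infty}=\norm{V_-}{\infty}$. The hypothesis $L\ge 2G$ guarantees $\Lambda_{2G}^d \subset \Lambda_L^d$, so the bound~\eqref{bounds:control-cost-equidistributed} applies to the symmetrized system~\eqref{eq:heat-like-tilde} on $\tilde\Omega$ with $G$ replaced by $2G$ and with dimensional constant $D(d)$; the resulting control cost takes exactly the form displayed in the proposition whenever $R\ge D(d)$.

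Finally, invoking Theorem~\ref{thm:null-controllability} with respect to $M_{\pi/4}$---admissible after a rotation, as explained in the remark following that theorem---transfers null-controllability from $\tilde\Omega$ to $\Omega$ with control cost no larger than that on $\tilde\Omega$. Choosing $R:=\max\{D(2),D(3)\}$ then provides a single constant valid both for $\cT_L$ and $\cP_L$, completing the argument. I do not expect any serious obstacle; the only point demanding a bit of attention is that in the prism case the equidistribution of $\tilde S$ has to be checked in $\RR^3$ even though $M_{\pi/4}$ acts only in the first two coordinates, with~\eqref{bounds:control-cost-equidistributed} correspondingly applied in dimension three.
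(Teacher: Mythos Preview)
Your proposal is correct and follows essentially the same route as the paper: use Remark~\ref{rmk:equidistributed} (the $\theta=\pi/4$ strengthening of Lemma~\ref{lemma:equidistributed}) to obtain a $(2G,\delta)$-equidistributed $\tilde S$, apply the cube bound~\eqref{bounds:control-cost-equidistributed} on $\tilde\Omega=\Lambda_L^d$, and conclude via Theorem~\ref{thm:null-controllability} with respect to $M_{\pi/4}$. The prism concern you flag is already absorbed by the fact that $H_\theta$, $M_\theta$, and Lemma~\ref{lemma:equidistributed}/Remark~\ref{rmk:equidistributed} are formulated in $\RR^d$ with the reflection acting only on the first two coordinates.
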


\begin{proof}
 Taking into account Remark~\ref{rmk:equidistributed}, this is proved analogously to the case $n=2$ in
 Proposition~\ref{prop:heat-eq-sector-equidistributed} with $\tilde\Omega=\Lambda_L^d$.
\end{proof}

\section{Abstract Result}\label{sec:abstract-result}

In this section we prove a general version of Theorem~\ref{thm:null-controllability} for more abstract control systems, which in
principle allows one to apply this result, for instance, also to other types of differential operators than discussed here so far,
such as fractional Laplacians, second order elliptic operators, and magnetic Schr\"odinger operators.

Let $\cH$ and $\cU$ be Banach spaces, $-H$ an infinitesimal generator of a $C_0$-semigroup $(S(t))_{t\geq 0}$ on $\cH$,
$B\colon\cU\to\cH$ a bounded linear operator, and $T>0$. The abstract Cauchy problem
\begin{equation}\label{eq:abstrCauchy}
\partial_t u(t) + Hu(t) = Bv(t) \quad\text{ for }\quad 0<t<T,\quad u(0)=u_0,
\end{equation}
with $u_0\in\cH$ and $v\in L^1((0,T),\cU)$ is said to be~\emph{null-controllable in time} $T>0$ with respect to $L^p((0,T),\cU)$,
$p \in [1,\infty]$, if for every initial datum $u_0\in\cH$ there is a function $v\in L^p((0,T),\cU)$ with
\begin{equation}\label{eq:defNullContr}
	S(T)u_0 + \int_0^T S(T-s)Bv(s) \dd s = 0,
\end{equation}
that is, if the mild solution to~\eqref{eq:abstrCauchy} vanishes at time $T$; see, e.g., \cite{Pazy83} for the notion of a mild
solution to abstract Cauchy problems. The associated~\emph{control cost in time $T>0$} is then defined as
\[
C_{T,p} := \sup_{\norm{u_0}{\cH}=1} \inf\{ \norm{v}{L^p((0,T),\cU)} \colon v \text{ satisfies }\eqref{eq:defNullContr} \} <\infty.
\]

The main result of this section is as follows.

\begin{thm}\label{thm:abstractResult}
	Let $\cH,\widetilde{\cH},\cU,\widetilde{\cU}$ be Banach spaces, $-H$ and $-\widetilde{H}$ infinitesimal generators of
	$C_0$-semigroups on $\cH$ and $\widetilde{\cH}$, respectively, and let $B\colon\cU\to\cH$ and
	$\widetilde{B}\colon\widetilde{\cU}\to\widetilde{\cH}$	be bounded linear operators.
	
	Suppose that there is a bounded linear operator $Y \colon \widetilde{\cH} \to \cH$ with a bounded right inverse
	$\hat Y \colon \cH \to \widetilde{\cH}$ such that
	\begin{equation}\label{eq:quasiCommutatorH}
		Y\widetilde{H} \subset HY,
	\end{equation}
	that is, $Yg\in\Dom(H)$ and $Y\widetilde{H}g = HYg$ for all $g\in\Dom(\widetilde{H})$. Furthermore, suppose that for some
	bounded	linear operator $Z \colon \widetilde{\cU} \to \cU$ one has
	\begin{equation}\label{eq:quasiCommutatorB}
		Y\widetilde{B} = B Z.
	\end{equation}
	
	If the system
	\begin{equation}\label{eq:systemTilde}
	\partial_t \widetilde{u}(t) + \widetilde{H}\widetilde{u}(t) = \widetilde{B}\widetilde{v}(t) \quad\text{ for }\quad 0<t<T,\quad
	\widetilde{u}(0)=\widetilde{u}_0,
	\end{equation}
	with $\widetilde{u}_0\in\widetilde{\cH}$ and $\widetilde{v}\in L^1((0,T),\widetilde{\cU})$ is null-controllable in time $T>0$
	with respect to $L^p((0,T),\widetilde{\cU})$ with 	control cost $\widetilde{C}_{T,p}>0$, then also the
	system~\eqref{eq:abstrCauchy} is null-controllable in time $T>0$ with respect to $L^p((0,T),\cU)$ with control cost $C_{T,p}>0$
	satisfying
	\[
		C_{T,p} \leq \norm{\hat Y}{\cH\to\widetilde{\cH}}\norm{Z}{\widetilde{\cU}\to\cU}\cdot\widetilde{C}_{T,p}.
	\]
\end{thm}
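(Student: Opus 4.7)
The plan is to lift from \eqref{eq:abstrCauchy} to \eqref{eq:systemTilde} via $\hat{Y}$, solve there by the assumed null-controllability, and then push the resulting control back down via $Z$. Concretely, given $u_0\in\cH$ and $\epsilon>0$, I would set $\widetilde{u}_0:=\hat{Y} u_0\in\widetilde{\cH}$, and by null-controllability of \eqref{eq:systemTilde} pick $\widetilde{v}\in L^p((0,T),\widetilde{\cU})$ satisfying
\[
 \widetilde{S}(T)\widetilde{u}_0 + \int_0^T \widetilde{S}(T-s)\widetilde{B}\widetilde{v}(s)\dd s = 0
 \quad\text{and}\quad
 \|\widetilde{v}\|_{L^p((0,T),\widetilde{\cU})}\le(\widetilde{C}_{T,p}+\epsilon)\|\widetilde{u}_0\|_{\widetilde{\cH}},
\]
where $\widetilde{S}(t)$ denotes the $C_0$-semigroup generated by $-\widetilde{H}$. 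Then define the candidate $v:=Z\widetilde{v}\in L^p((0,T),\cU)$. The announced bound on $C_{T,p}$ will follow at once from $\|v\|_{L^p}\le\|Z\|\cdot\|\widetilde{v}\|_{L^p}$ and $\|\widetilde{u}_0\|\le\|\hat{Y}\|\cdot\|u_0\|$, once $v$ is shown to satisfy the null-control identity \eqref{eq:defNullContr} for $u_0$.

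The technical heart of the argument is to upgrade the generator intertwining \eqref{eq:quasiCommutatorH} to the semigroup intertwining $Y\widetilde{S}(t)=S(t)Y$ for every $t\ge0$. To do so, fix $t\ge0$ and $g\in\Dom(\widetilde{H})$; then $\widetilde{S}(s)g\in\Dom(\widetilde{H})$ for all $s\ge 0$, and \eqref{eq:quasiCommutatorH} gives $Y\widetilde{S}(s)g\in\Dom(H)$ with $Y\widetilde{H}\widetilde{S}(s)g = HY\widetilde{S}(s)g$. Consider $\psi(s):=S(t-s)Y\widetilde{S}(s)g$ on $[0,t]$. A direct difference-quotient computation, using that $-H$ generates $S(\cdot)$ and that $S(t-s)$ preserves $\Dom(H)$ and commutes with $H$ there, yields
\[
 \psi'(s) = HS(t-s)Y\widetilde{S}(s)g - S(t-s)Y\widetilde{H}\widetilde{S}(s)g = 0,
\]
so $\psi(0)=\psi(t)$, i.e.\ $S(t)Yg = Y\widetilde{S}(t)g$. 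Density of $\Dom(\widetilde{H})$ in $\widetilde{\cH}$ together with boundedness of $Y$, $S(t)$, and $\widetilde{S}(t)$ extends the identity to all of $\widetilde{\cH}$.

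With the intertwining in hand, applying $Y$ to the null-control identity for $\widetilde{v}$, commuting $Y$ past the Bochner integral by boundedness, and invoking \eqref{eq:quasiCommutatorB} together with $Y\hat{Y}=I_{\cH}$ gives
\[
 0 = S(T)u_0 + \int_0^T S(T-s)BZ\widetilde{v}(s)\dd s = S(T)u_0 + \int_0^T S(T-s)Bv(s)\dd s,
\]
which is precisely \eqref{eq:defNullContr} for $u_0$ and $v$. Taking the infimum over admissible controls for $u_0$, then the supremum over $\|u_0\|=1$, and finally sending $\epsilon\to0$ produces the estimate $C_{T,p}\le\|\hat{Y}\|\cdot\|Z\|\cdot\widetilde{C}_{T,p}$. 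The main obstacle is, as expected, the semigroup intertwining step: \eqref{eq:quasiCommutatorH} is genuinely an inclusion and the operators involved are in general unbounded, so one must keep careful track of domains. Restricting to $g\in\Dom(\widetilde{H})$ and exploiting invariance of $\Dom(\widetilde{H})$ under $\widetilde{S}(\cdot)$ is what forces all intermediate vectors into $\Dom(H)$, after which the density argument closes the gap.
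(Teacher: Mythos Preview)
Your proposal is correct and follows essentially the same strategy as the paper: lift via $\hat Y$, use null-controllability upstairs, push the control down via $Z$, and reduce everything to the semigroup intertwining $Y\widetilde S(t)=S(t)Y$. The only minor variation is in how you establish the intertwining: you use the constant-of-motion function $\psi(s)=S(t-s)Y\widetilde S(s)g$ and show $\psi'\equiv 0$, whereas the paper instead verifies that $w(t)=Y\widetilde S(t)\widetilde u_0$ solves the Cauchy problem $\partial_t w=-Hw$, $w(0)=Y\widetilde u_0$, and invokes uniqueness of classical solutions; both are standard and equivalent, and your explicit $\epsilon$ in the cost estimate is a slightly more careful accounting of the infimum than the paper's.
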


\begin{proof}
	Let $(S(t))_{t\geq 0}$ and $(\widetilde{S}(t))_{t\geq 0}$ be the $C_0$-semigroups associated to $-H$ and $-\widetilde{H}$,
	respectively. We first show that
	\begin{equation}\label{eq:semigroupQuasiCommutator}
		Y\widetilde{S}(t)
		=
		S(t)Y
		\quad\text{ for all }\ t > 0
		.
	\end{equation}
	To this end, let $\widetilde{u}_0 \in \Dom(\widetilde{\cH})$, and define the function $w \colon [0,\infty) \to \cH$ by
	$w(t) := Y\widetilde{S}(t)\widetilde{u}_0$. Since $\widetilde{S}(t)\widetilde{u}_0 \in \Dom(\widetilde{H})$ holds for all
	$t \geq 0$,	we then have $w(t) \in \Dom(H)$ and, in particular, $w(0) = Y\widetilde{u}_0 \in \Dom(H)$. Moreover,
	\[
		\partial_t w(t)
		=
		-Y\widetilde{H}\widetilde{S}(t)\widetilde{u}_0
		=
		-HY\widetilde{S}(t)\widetilde{u}_0
		=
		-Hw(t)
	\]
	for	$t > 0$. By uniqueness of the solution, see, e.g.,~\cite[Theorem~4.1.3]{Pazy83}, we conclude that $w(t) = S(t)w(0)$, that
	is, $Y\widetilde{S}(t)\widetilde{u}_0 = S(t)Y\widetilde{u}_0$ for all $t \geq 0$. Since $\Dom(\widetilde{H})$ is dense in
	$\widetilde{\cH}$ and $S(t)$ and $\widetilde{S}(t)$ are bounded operators, this proves~\eqref{eq:semigroupQuasiCommutator}.

	Let now $u_0 \in \cH$, and set $\widetilde{u}_0 := \hat Yu_0 \in \widetilde{\cH}$. Then, $Y\widetilde{u}_0 = u_0$, and by hypothesis
	there is $\widetilde{v} \in L^p((0,T),\widetilde{\cU})$ such that
	\[
		\widetilde{S}(T)\widetilde{u}_0 + \int_0^T \widetilde{S}(T-s)\widetilde{B}\widetilde{v}(s) \dd s = 0.
	\]
	Define $v \in L^p((0,T),\cU)$ by $v(s) := Z\widetilde{v}(s)$. By~\eqref{eq:quasiCommutatorB}
	and~\eqref{eq:semigroupQuasiCommutator}, we then conclude that
	\begin{multline*}
		S(T)u_0 + \int_0^T S(T-s)Bv(s) \dd s
		=
		S(T)Y\widetilde{u}_0 + \int_0^T S(T-s)BZ\widetilde{v}(s) \dd s\\
		=
		Y \left( \widetilde{S}(T)\widetilde{u}_0 + \int_0^T \widetilde{S}(T-s)\widetilde{B}\widetilde{v}(s) \dd s \right)
		=
		0
		,
	\end{multline*}
	so that system~\eqref{eq:abstrCauchy} is indeed null-controllable in time $T>0$ with respect to $L^p((0,T),\cU)$.

	Finally, using $\norm{v(s)}{\cU} \leq \norm{Z}{\widetilde{\cU}\to\cU} \norm{\widetilde{v}(s)}{\widetilde{\cU}}$ for
	$s \in (0,T)$, we observe that
	$\norm{v}{L^p((0,T),\cU)} \leq \norm{Z}{\widetilde{\cU}\to\cU} \norm{\widetilde{v}}{L^p((0,T),\widetilde{\cU})}$. Since also
	$\norm{\widetilde{u}_0}{\widetilde{\cH}} \leq \norm{\hat Y }{\cU\to\widetilde{\cU}} \norm{u_0}{\cH}$, the claimed estimate for the
	control cost $C_{T,p}$ is immediate. This completes the proof.
\end{proof}%

\begin{rmk}	\label{rmk:semigroups-as-assumption}
	Since the extension relation~\eqref{eq:quasiCommutatorH} is only used to establish the semigroup
	relation~\eqref{eq:semigroupQuasiCommutator}, the conclusion of Theorem~\ref{thm:abstractResult} still holds
	if~\eqref{eq:semigroupQuasiCommutator} is directly taken as an assumption. 

	We refrained from doing so because the extension relation between the operators gives a nice understanding of the interplay
	between the systems~\eqref{eq:abstrCauchy} and~\eqref{eq:systemTilde}.
\end{rmk}

In the particular case where $H$ and $\widetilde{H}$ are lower semibounded self-adjoint operators on Hilbert spaces $\cH$ and
$\widetilde{\cH}$, respectively, relation~\eqref{eq:semigroupQuasiCommutator} for the semigroups can be extended to far more
general functions of the operators. The following result formulates this explicitly. It is of particular interest, for instance,
when considering fractional Laplacians of the form $(-\Delta)^\theta$ with $\theta > 1/2$, see
Remark~\ref{rmk:modification}\,(a) below. The result itself is probably well known and can be proved, for instance, with a
standard reasoning using Stone's formula. We give a brief variant of this reasoning below for the convenience of the reader.

\begin{lemma}\label{lemma:functionalQuasiCommutator}
 Let $H$ and $\widetilde{H}$ be lower semibounded self-adjoint operators on Hilbert spaces $\cH$ and $\widetilde{\cH}$,
 respectively, and let $Y\colon\widetilde{\cH}\to\cH$ be a bounded linear operator such that $Y\widetilde{H} \subset HY$.
 Then, the spectral families $\EE_{\tilde H}$ and $\EE_H$ for $\tilde{H}$ and $H$, respectively, satisfy
 \[
  Y\EE_{\widetilde{H}}(\lambda) = \EE_H(\lambda)Y
  \quad\text{ for all }\ \lambda\in\RR.
 \]
 Moreover, for every Borel measurable function $\phi\colon\RR\to\RR$ the relation
 \begin{equation*}
  Y\phi(\widetilde{H}) \subset \phi(H)Y
 \end{equation*}
 holds, where $\phi(H)$ and $\phi(\widetilde{H})$ are defined by functional calculus.
 
 \begin{proof}
  It suffices to consider the case where $\cH$ and $\widetilde{\cH}$ are complex Hilbert spaces. The case of real Hilbert spaces
  can be obtained from this by complexification.
  
  In view of the relation $Y\tilde{H} \subset HY$, we have $Y(\widetilde{H}-s\pm\ii\varepsilon) \subset (H-s\pm\ii\varepsilon)Y$
  for every $s\in\RR$ and $\varepsilon>0$, so that $Y(\widetilde{H}-s\pm\ii\epsilon)^{-1}g = (H-s\pm\ii\epsilon)^{-1}Yg$ for all
  $g\in\widetilde{\cH}$ and, hence,
  \begin{equation*}
   \langle (\widetilde{H}-s\pm\ii\epsilon)^{-1}g , Y^*f \rangle_{\widetilde{\cH}}
   =
   \langle (H-s\pm\ii\epsilon)^{-1}Yg , f \rangle_{\cH}
   \quad \forall\, f\in\cH,\quad \forall \, g\in \widetilde{\cH}.
  \end{equation*}
  Stone's formula for the spectral families (see, e.g.,~\cite[Satz~8.11]{Wei00})
  then implies that
  \[
   \langle Y\EE_{\widetilde{H}}(\lambda) g , f \rangle_{\cH}
   =
   \langle \EE_{\widetilde{H}}(\lambda) g , Y^*f \rangle_{\widetilde{\cH}}
   =
   \langle \EE_{H}(\lambda) Yg , f \rangle_{\cH}
  \]
  for all $f\in\cH$, $g\in\widetilde{\cH}$, and $\lambda\in\RR$, that is, $Y\EE_{\widetilde{H}}(\lambda)=\EE_H(\lambda)Y$ for all
  $\lambda\in\RR$, which proves the first claim of the lemma.

	Now, let $\phi\colon\RR\to\RR$ be a Borel measurable function and $g\in\Dom(\phi(\widetilde{H}))$. The above then implies that
	$\langle \EE_{H}(\lambda)Yg , Yg \rangle_{\cH} \leq \norm{Y}{\widetilde{\cH}\to\cH}^2
	\langle \EE_{\widetilde{H}}(\lambda)g , g \rangle_{\cH}$, so that by functional calculus we conclude that $Yg \in \Dom(H)$ and
	\[
	 \langle Y\phi(\widetilde{H})g , f \rangle_{\cH}
	 =
	 \langle \phi(\widetilde{H})g , Y^*f \rangle_{\widetilde{\cH}}
	 =
	 \langle \phi(H)Yg , f \rangle_{\cH}
	 \qquad \forall \, f\in \cH.
	\]
	This proves the second claim and, hence, completes the proof.
 \end{proof}%
\end{lemma}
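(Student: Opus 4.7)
The plan is to reduce everything to an intertwining relation for resolvents, from which Stone's formula will deliver the spectral-projection identity, and then standard spectral theory will handle the statement for a general Borel $\phi$. After complexifying if $\cH$ and $\widetilde{\cH}$ are real, the first step is to verify
\[
 Y(\widetilde{H} - z)^{-1} = (H - z)^{-1} Y \quad\text{for every }z \in \CC \setminus \RR.
\]
This is a direct manipulation of $Y\widetilde{H} \subset HY$: given $g \in \widetilde{\cH}$, the element $h := (\widetilde{H}-z)^{-1} g$ lies in $\Dom(\widetilde{H})$, so $Yh \in \Dom(H)$ and $(H-z)Yh = Y(\widetilde{H}-z)h = Yg$, whence $Yh = (H-z)^{-1} Yg$.

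The second step is to invoke Stone's formula, which represents the spectral projections $\EE_{\widetilde{H}}(\lambda)$ and $\EE_H(\lambda)$ as weak limits of linear combinations of resolvents at $s \pm \ii\epsilon$ integrated over $s \leq \lambda$ as $\epsilon \downarrow 0$. The resolvent intertwining obtained in step one transfers verbatim through this limit, giving for all $f \in \cH$, $g \in \widetilde{\cH}$, and $\lambda \in \RR$,
\[
 \langle Y \EE_{\widetilde{H}}(\lambda) g , f \rangle_{\cH} = \langle \EE_{\widetilde{H}}(\lambda) g , Y^\ast f \rangle_{\widetilde{\cH}} = \langle \EE_H(\lambda) Y g , f \rangle_{\cH},
\]
which yields the spectral-projection identity once the correct right-continuous normalisation convention is fixed at atoms.

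For the third step --- the functional-calculus statement --- I would first note that the spectral-projection identity upgrades to the measure-theoretic inequality
\[
 \mathrm{d} \langle \EE_H(\cdot) Y g , Y g \rangle_{\cH} \leq \norm{Y}{\widetilde{\cH} \to \cH}^2 \, \mathrm{d} \langle \EE_{\widetilde{H}}(\cdot) g , g \rangle_{\widetilde{\cH}}.
\]
Hence, whenever $g \in \Dom(\phi(\widetilde{H}))$, the integrability of $|\phi|^2$ against the right-hand measure forces the same for the left-hand one, so $Yg \in \Dom(\phi(H))$. The equality $Y \phi(\widetilde{H}) g = \phi(H) Y g$ is then obtained by truncating $\phi_n := \phi \cdot \chi_{[-n,n]}$: each $\phi_n(\widetilde{H})$ is a norm-limit of simple functions of the spectral projections (which commute with $Y$ by step two), so the identity holds for $\phi_n$; letting $n \to \infty$ and using strong convergence $\phi_n(\widetilde{H}) g \to \phi(\widetilde{H}) g$ and $\phi_n(H) Y g \to \phi(H) Y g$ together with boundedness of $Y$ gives the full claim. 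The only delicate point --- and the closest thing to an obstacle --- is the careful treatment of domain inclusions in the unbounded functional calculus and the standard bookkeeping around Stone's formula at atoms of the spectrum; neither is a deep difficulty, but both demand precision to avoid sloppy spectral-measure manipulations.
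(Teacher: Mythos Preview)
Your proposal is correct and follows essentially the same route as the paper: complexify, pass from $Y\widetilde{H}\subset HY$ to the resolvent intertwining, apply Stone's formula for the spectral families, and then use the resulting measure inequality to handle domains in the functional-calculus step. The only cosmetic difference is that the paper closes the last step with a direct inner-product identity $\langle \phi(\widetilde{H})g, Y^*f\rangle_{\widetilde{\cH}} = \langle \phi(H)Yg, f\rangle_{\cH}$ rather than your truncation $\phi_n = \phi\cdot\chi_{[-n,n]}$, but both are standard and equivalent in substance.
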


\begin{rmk}
 In the case of complex Hilbert spaces in Lemma~\ref{lemma:functionalQuasiCommutator}, also complex-valued Borel measurable
 functions $\phi$ can be considered, obviously, without any change to the proof.
\end{rmk}

\section{Proof of the main theorem}\label{sec:proof-main-thm}
We deduce Theorem~\ref{thm:null-controllability} from the abstract result Theorem~\ref{thm:abstractResult} with the choices
$\cH = \cU = L^2(\Omega)$, $\widetilde{\cH} = \widetilde{\cU} = L^2(\tilde\Omega)$, $H = H_\Omega^\bullet$,
$\widetilde{H} = H_{\tilde{\Omega}}^\bullet$, $B = \chi_\omega$, $\widetilde{B} = \chi_{\tilde{\omega}}$, and $p = 2$. Set
$\lambda^N:=1$ and $\lambda^D:=-1$, and for $\bullet\in\{D,N\}$ define
\[
 X^\bullet \colon L^2(\Omega)\rightarrow L^2(\tilde\Omega), \qquad X^\bullet f := f \oplus (\lambda^\bullet Jf),
\]
with respect to the orthogonal decomposition $L^2(\tilde\Omega)=L^2(\Omega)\oplus L^2(M(\Omega))$, where
$J\colon L^2(\Omega)\rightarrow L^2(M(\Omega))$ is defined as $J f := f\circ M$.

The adjoint $(X^\bullet)^*$ of $X^\bullet$ acts as
\begin{equation}\label{eq:Xadjoint}
 (X^\bullet)^*g = (g+\lambda^\bullet g\circ M)|_\Omega
\end{equation}
for all $g\in L^2(\tilde\Omega)$. Indeed, for $f\in L^2(\Omega)$ and $g\in L^2(\tilde\Omega)$ we have
\[
 \int_{M(\Omega)} (J f)(x)\overline{g(x)} \dd x
 = \int_{M(\Omega)} f(M(x))\overline{g(x)} \dd x = \int_{\Omega} f(x)\overline{g(M(x))} \dd x
\]
by change of variables, so that
\[
 \langle X^\bullet f,g \rangle_{L^2(\tilde\Omega)}
 = \int_{\Omega} f(x)\overline{\bigl(g(x) + \lambda^\bullet g(M(x))\bigr)} \dd x.
\]
It is then easy to see that
\begin{equation}\label{eq:rightinverseX}
 (X^\bullet)^* X^\bullet f = 2f, \qquad \forall\, f\in L^2(\Omega).
\end{equation}
Moreover, since $\tilde{\omega} = \omega \cup M(\omega)$ by construction, we have
$\chi_{\tilde\omega} \circ M = \chi_{\tilde\omega}$ and, hence,
\begin{equation}\label{eq:controlQuasiCommutator}
 (X^\bullet)^*\chi_{\tilde{\omega}}g
 =
 \bigl( \chi_{\tilde{\omega}} (g + \lambda^\bullet g\circ M) \bigr)|_\Omega
 =
 \chi_\omega (X^\bullet)^* g
 ,\qquad \forall\, g\in L^2(\tilde\Omega)
 .
\end{equation}
Hence, with the choice $Y = Z = (X^\bullet)^*$, condition~\eqref{eq:quasiCommutatorB} in Theorem~\ref{thm:abstractResult} is
satisfied, and we may take $\hat Y=X^\bullet/2$ as a bounded right inverse of $Y$ with $\norm{\hat Y}{}\norm{Z}{}=1$.

It remains to verify the extension relation~\eqref{eq:quasiCommutatorH} for the operators $H_\Omega^\bullet$ and
$H_{\tilde{\Omega}}^\bullet$, that is, $(X^\bullet)^*H_{\tilde{\Omega}}^\bullet \subset H_\Omega^\bullet(X^\bullet)^*$. 
To this end, let us first briefly recall how the
operators $H_\Omega^\bullet$ and $H_{\tilde{\Omega}}^\bullet$ are constructed. Since it is in general hard to define them directly
by their differential expression, especially if the matrix function $A$ has discontinuities, the established approach to do that is
via their quadratic forms:
Consider the form $\fh_\Omega^\bullet = \fa_\Omega^\bullet + V$ with
\[
 \fh_\Omega^\bullet[f,g] := \fa_\Omega^\bullet[f,g] + \langle Vf,g \rangle_{L^2(\Omega)},\quad
 f,g\in\Dom(\fh_\Omega^\bullet):=\Dom(\fa_\Omega^\bullet),
\]
where
\[
 \fa_\Omega^N[f,g]
 :=
 \int_\Omega \langle A(x) \nabla f(x), \nabla g(x)\rangle_{\CC^d}\dd x,
 \quad
 f,g\in\Dom(\fa_\Omega^N):= H^1(\Omega),
\]
and
\[
 \fa_\Omega^D[f,g]
 :=
 \fa_\Omega^N[f,g],
 \quad
 f,g\in\Dom(\fa_\Omega^D):=H_0^1(\Omega).
\]
The ellipticity condition~\eqref{eq:ellipticity} guarantees that this form $\fh_\Omega^\bullet$ is densely defined, lower
semibounded, and closed, cf.~\cite[Example~4.19]{Sto01}. Hence, there is a self-adjoint operator
$H_\Omega^\bullet=H_\Omega^\bullet(A,V)$ on $L^2(\Omega)$ given by
\begin{equation}\label{eq:domainH}
 \begin{aligned}
  \Dom(H_\Omega^\bullet) = \bigl\{ f\in\Dom(\fh_\Omega^\bullet)\colon \exists\,h&\in L^2(\Omega)\text{ s.t.~}\\
  &\fh_\Omega^\bullet[f,g]=\langle h,g \rangle_{L^2(\Omega)}\ \forall\,g\in\Dom(\fh_\Omega^\bullet) \bigr\}
 \end{aligned}
\end{equation}
and
\begin{equation}\label{eq:defH}
 \fh_\Omega^\bullet[f,g]
 =
 \langle H_\Omega^\bullet f,g \rangle_{L^2(\Omega)},
 \quad
 f\in\Dom(H_\Omega^\bullet),\quad g\in\Dom(\fh_\Omega^\bullet),
\end{equation}
see, e.g.,~\cite[Theorem~VI.2.6]{Kato95},~\cite[Theorem~10.7]{Schm12}, or~\cite[Theorem~VIII.15]{RS80}. The operator
$H_{\tilde\Omega}^\bullet$ on $L^2(\tilde\Omega)$ is defined completely analogous via the form
$\fh_{\tilde\Omega}^\bullet=\fa_{\tilde\Omega}^\bullet+\tilde{V}$ with
\[
 \fa_{\tilde\Omega}^N[f,g]
 :=
 \int_{\tilde\Omega} \langle \tilde{A}(x)\nabla f(x),\nabla g(x) \rangle_{\CC^d}\dd x,
 \quad
 f,g\in\Dom(\fa_{\tilde\Omega}^N):=H^1(\tilde\Omega),
\]
and
\[
 \fa_{\tilde\Omega}^D[f,g]
 :=
 \fa_{\tilde\Omega}^N[f,g],
 \quad
 f,g\in\Dom(\fa_{\tilde\Omega}^D):=H_0^1(\tilde\Omega).
\]

The desired extension relation $(X^\bullet)^* H_{\tilde{\Omega}}^\bullet \subset H_\Omega^\bullet (X^\bullet)^*$ can now be
formulated in terms of the quadratic forms as
\begin{equation}\label{eq:formQuasiCommutator}
 \fh_{\tilde{\Omega}}^\bullet[g,X^\bullet f]
 =
 \fh_\Omega^\bullet[(X^\bullet)^*g,f],
 \quad
 f \in \Dom(\fh_\Omega^\bullet),
 \quad
 g \in \Dom(\fh_{\tilde{\Omega}}^\bullet),
\end{equation}
provided that $X^\bullet f \in \Dom(\fh_{\tilde{\Omega}}^\bullet) = \Dom(\fa_{\tilde{\Omega}}^\bullet)$ and
$(X^\bullet)^*g \in \Dom(\fh_\Omega^\bullet) = \Dom(\fa_\Omega^\bullet)$; see the formal proof below.

In order to verify~\eqref{eq:formQuasiCommutator}, we first take care of the mapping properties of $X^\bullet$ and $(X^\bullet)^*$.

\begin{lemma}\label{lemma:domainX}
 \begin{enumerate}[(a)]
  \item $(X^\bullet)^*$ maps $\Dom(\fa_{\tilde\Omega}^\bullet)$ into $\Dom(\fa_\Omega^\bullet)$.
  \item $X^\bullet$ maps $\Dom(\fa_\Omega^\bullet)$ into $\Dom(\fa_{\tilde\Omega}^\bullet)$, and one has
        \[
         \bigl(\nabla(X^\bullet f)\bigr)|_\Omega = \nabla f \quad\text{ and }\quad
         \bigl(\nabla(X^\bullet f)\bigr)|_{M(\Omega)} = \lambda^\bullet U(\nabla f)\circ M
        \]
        for $f\in\Dom(\fa_\Omega^\bullet)$ with $U = \diag(-1,1,\dots,1)$.
 \end{enumerate}
 
 \begin{proof}
  (a). The case $\bullet = N$ is clear by~\eqref{eq:Xadjoint}. For $\bullet = D$ let first
  $g\in C_c^\infty(\tilde\Omega)$. Then, $g-g\circ M\in C_c^\infty(\tilde\Omega)$ and $g-g\circ M=0$ on $\Gamma$, and part~(c) of
  Lemma~\ref{lem:intParts} in the appendix yields $(X^D)^*g=(g-g\circ M)|_\Omega\in H_0^1(\Omega)$. The case of general
  $g\in H_0^1(\tilde\Omega)$ follows from this by approximation.

  (b). Let $f\in\Dom(\fa_\Omega^\bullet)$ and $\varphi\in C_c^\infty(\tilde\Omega)$, and abbreviate $\alpha_1:=-1$ and $\alpha_k:=1$
  for $k\ge 2$. We then have to show that
  \begin{equation}\label{eq:Xweak}
   -\int_{\tilde\Omega} (X^\bullet f)(x)(\partial_k\varphi)(x)\dd x
   = \int_\Omega \bigl(\partial_k f\oplus \lambda^\bullet\alpha_k(\partial_k f)\circ M\bigr)(x)\varphi(x)\dd x
  \end{equation}
  for $k=1,\dots,d$. To this end, observe that $(\partial_k\varphi)\circ M=\alpha_k\partial_k(\varphi\circ M)$, so that
  \begin{equation}\label{eq:XintParts}
   \int_{\tilde\Omega}(X^\bullet f)(x)(\partial_k\varphi)(x)\dd x
   = \int_{\Omega} f(x)\partial_k(\varphi+\lambda^\bullet\alpha_k\varphi\circ M)(x)\dd x
  \end{equation}
  by change of variables. Now, for each $k\in\{1,\dots,d\}$ and $\bullet\in\{N,D\}$, parts (a) and (b) of Lemma~\ref{lem:intParts}
  allow to perform integration by parts without boundary terms on the right-hand side of~\eqref{eq:XintParts}. We therefore obtain 
  \begin{linenomath}
  \begin{align*}
   -\int_{\Omega} f(x)\partial_k(\varphi+\lambda^\bullet\alpha_k\varphi\circ M)(x)\dd x
   &= \int_{\Omega} (\varphi+\lambda^\bullet\alpha_k\varphi\circ M)(x)(\partial_k f)(x)\dd x\\
   &= \int_{\tilde\Omega} \bigl(\partial_k f\oplus\lambda^\bullet\alpha_k(\partial_k f)\circ M\bigr)(x)\varphi(x)\dd x,
  \end{align*}
  \end{linenomath}
  where the last equality follows again by a change of variables. This equality together with~\eqref{eq:XintParts} proves
  equation~\eqref{eq:Xweak} and, hence, completes the proof.
 \end{proof}%
\end{lemma}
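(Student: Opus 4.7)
The plan is to verify both parts directly from the definition of the Sobolev spaces, testing against $C_c^\infty(\tilde\Omega)$ and exploiting the smooth change of variables $x\mapsto M(x)$, with the sign $\lambda^\bullet$ encoding the matching boundary condition: $\lambda^N=+1$ corresponds to an even reflection (matching Neumann), $\lambda^D=-1$ to an odd reflection (matching Dirichlet).

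For part (a), the Neumann case follows immediately from $(X^N)^*g=(g+g\circ M)|_\Omega$: the first summand lies in $H^1(\Omega)$ because $g\in H^1(\tilde\Omega)$, and the second because $M$ is a smooth isometry mapping $M(\Omega)$ onto $\Omega$ with $|\det U|=1$, so composition with $M$ is bounded from $H^1(M(\Omega))$ to $H^1(\Omega)$. For the Dirichlet case I would first handle $g\in C_c^\infty(\tilde\Omega)$: the function $g-g\circ M$ is smooth, vanishes identically on $\{x_1=0\}\supset\Gamma$, and vanishes near $\partial\tilde\Omega\cap\partial\Omega$ since $g$ does. A trace-vanishing characterization of $H_0^1(\Omega)$ such as Lemma~\ref{lem:intParts}(c) then places $(X^D)^*g$ in $H_0^1(\Omega)$, and the general case follows by density of $C_c^\infty(\tilde\Omega)$ in $H_0^1(\tilde\Omega)$ combined with continuity of $(X^D)^*$ in the $H^1$-norm.

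For part (b), I would identify the weak gradient of $X^\bullet f$ by testing against $\varphi\in C_c^\infty(\tilde\Omega)$. Splitting $\int_{\tilde\Omega}(X^\bullet f)\partial_k\varphi\,\dd x$ into integrals over $\Omega$ and $M(\Omega)$ and applying $x\mapsto M(x)$ on the latter (which turns $\partial_k\varphi$ into $\alpha_k\partial_k(\varphi\circ M)$, with $\alpha_1=-1$ and $\alpha_k=1$ for $k\geq 2$) reduces the task to integrating $f$ against $\partial_k(\varphi+\lambda^\bullet\alpha_k\,\varphi\circ M)$ over $\Omega$. The appendix's integration-by-parts formula (Lemma~\ref{lem:intParts}(a)--(b)) is tailored to produce no boundary contribution on $\Gamma$: for $\bullet=N$ the symmetric combination kills the normal trace there, while for $\bullet=D$ the trace of $f$ on $\Gamma\subset\partial\Omega$ vanishes already. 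Reversing the change of variables on $M(\Omega)$ then yields the stated gradient formula, and $X^\bullet f\in H^1(\tilde\Omega)$ (respectively $H_0^1(\tilde\Omega)$ in the Dirichlet case) follows automatically. The main technical hurdle is precisely this sign bookkeeping at the interface $\Gamma$, where the chain-rule factor $\alpha_k$ must conspire with $\lambda^\bullet$; this conspiracy is the analytic manifestation of the classical even/odd reflection principle.
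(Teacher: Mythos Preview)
Your proposal is correct and follows essentially the same route as the paper: part~(a) handles $\bullet=N$ by the explicit formula~\eqref{eq:Xadjoint} and $\bullet=D$ via Lemma~\ref{lem:intParts}\,(c) on $C_c^\infty(\tilde\Omega)$ plus density, while part~(b) tests against $\varphi\in C_c^\infty(\tilde\Omega)$, uses the change of variables $x\mapsto M(x)$ to reduce to an integral over $\Omega$, and then invokes Lemma~\ref{lem:intParts}\,(a)--(b) to integrate by parts without boundary terms. One minor point of phrasing: for $\bullet=N$ and $k=1$ it is not that a normal trace is killed but rather that the combined test function $\varphi+\lambda^N\alpha_1\,\varphi\circ M=\varphi-\varphi\circ M$ vanishes on $\Gamma$, which is exactly the hypothesis $\phi|_\Gamma=0$ in Lemma~\ref{lem:intParts}\,(b); your sign bookkeeping is right, just the verbal description could be sharpened.
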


We are finally in position to prove our main result.

\begin{proof}[Proof of Theorem~\ref{thm:null-controllability}]
 We need to verify identity~\eqref{eq:formQuasiCommutator}. To this end, let $f \in \Dom(\fh_\Omega^\bullet) = \Dom(\fa_\Omega^\bullet)$ and
 $g \in \Dom(\fh_{\tilde{\Omega}}^\bullet) = \Dom(\fa_{\tilde{\Omega}}^\bullet)$. Since
 $U(\nabla g) \circ M = \nabla(g\circ M)$ and, by definition, $\tilde{A}(x) = UA(M(x))U$ for $x \in M(\Omega)$, we obtain from
 part~(b) of Lemma~\ref{lemma:domainX} with change of variables that
 \begin{linenomath}
 \begin{multline*}
  \int_{M(\Omega)} \langle \tilde{A}(x)\nabla g(x) , \nabla (X^\bullet f)(x) \rangle_{\CC^d} \dd x\\
  =
  \lambda^\bullet \int_\Omega \langle A(x)\nabla (g\circ M)(x) , \nabla f(x) \rangle_{\CC^d} \dd x.
 \end{multline*}
 \end{linenomath}
 Thus,
 \begin{equation}
  \label{eq:quasiCommutatora}
  \begin{aligned}
  \fa_{\tilde{\Omega}}^\bullet[g,X^\bullet f]
  &=
  \int_\Omega \langle A(x)\nabla(g + \lambda^\bullet g\circ M)(x) , (\nabla f)(x) \rangle_{\CC^d} \dd x\\
  &=
  \fa_\Omega^\bullet[(X^\bullet)^*g,f]
  ,
  \end{aligned}
 \end{equation}
 where we have taken into account~\eqref{eq:Xadjoint} and part~(a) of Lemma~\ref{lemma:domainX}. Since also
 $\tilde{V} \circ M = \tilde{V}$ and, therefore, $(X^\bullet)^*\tilde{V} = V(X^\bullet)^*$, this
 proves~\eqref{eq:formQuasiCommutator}. In turn, for
 $g \in \Dom(H_{\tilde\Omega}^\bullet) \subset \Dom(\fh_{\tilde\Omega}^\bullet)$ we conclude that
 \[
  \fh_{\Omega}^\bullet[(X^\bullet)^* g,f]
  =
  \langle H_{\tilde\Omega}^\bullet g,X^\bullet f \rangle_{L^2(\Omega)}
  =
  \langle (X^\bullet)^* H_{\tilde\Omega}^\bullet g,f \rangle_{L^2(\tilde\Omega)}
 \]
 for all $f\in\Dom(\fh_{\Omega}^\bullet)$, so that $(X^\bullet)^* g\in\Dom(H_{\Omega}^\bullet)$ and
 $H_\Omega^\bullet (X^\bullet)^* g = (X^\bullet)^* H_{\tilde\Omega}^\bullet g$, which proves the desired extension
 relation~\eqref{eq:quasiCommutatorH}. Since also~\eqref{eq:quasiCommutatorB} is satisfied by~\eqref{eq:controlQuasiCommutator},
 applying Theorem~\ref{thm:abstractResult} with $\hat Y = X^\bullet / 2$ completes the proof.
\end{proof}%

\begin{rmk}\label{rmk:modification}
 (a) The above proof in particular shows the extension relation
 $(X^\bullet)^*(-\Delta_{\tilde\Omega}^\bullet) \subset (-\Delta_\Omega^\bullet)(X^\bullet)^*$. By
 Lemma~\ref{lemma:functionalQuasiCommutator} with the particular choice $\phi=(\cdot)^\theta \mathbbm{1}_{[0,\infty)}$,
 $\theta > 1/2$, we then also have
 $(X^\bullet)^*(-\Delta_{\tilde\Omega}^\bullet)^\theta \subset (-\Delta_\Omega^\bullet)^\theta (X^\bullet)^*$. This allows us to
 apply Theorem~\ref{thm:abstractResult} also in this situation, which gives an analogue of Theorem~\ref{thm:null-controllability}
 for such fractional Laplacians, as claimed in Remark~\ref{rmk:fractionalLaplacian-BanachSpaces}\,(a).
 
 (b) A modification of the above allows to consider also certain operators on $L^p$, $p\in(1,\infty)$. We demonstrate this briefly
 for the pure Laplacian on $L^p(\Omega)$, $\Omega = (0, \infty)\times\RR^{d-1}$: Set $\tilde \Omega=\RR^d$ and let
 $-\Delta_{\Omega, p}^\bullet$ and $-\Delta_{\tilde \Omega, p}$ be the realizations on the differential expression $-\Delta$ on
 $L^p(\Omega)$ and $L^p(\tilde{\Omega})$, respectively, as closed sectorial operators, see,
 e.g.,~\cite[Theorem~5.4 and Corollary~6.11]{DHP03}. These agree with $-\Delta_\Omega^\bullet = -\Delta_{\Omega,2}^\bullet$ and
 $-\Delta_{\tilde\Omega} = -\Delta_{\tilde{\Omega},2}$, 
 respectively, on common elements of their domains. Consider now the bounded
 operator
 $Y^\bullet_p\colon L^p(\tilde \Omega) \rightarrow L^p({\Omega})$, $Y_p^\bullet g = (g + \lambda^\bullet g\circ M)\vert_{\Omega}$,
 with $\norm{Y_p^\bullet}{L^p\rightarrow L^p}= 2^{1/q}$, $1/p+1/q=1$, a right inverse of which is given by
 $\hat Y^\bullet_p \colon L^p({\Omega}) \rightarrow L^p(\tilde \Omega)$,
 $\hat Y^\bullet_p f = 2^{-1}(f \oplus \lambda^\bullet f\circ M)$, with $\norm{\hat Y_p^\bullet}{L^p\rightarrow L^p}= 2^{1/p-1}$. 
 Let $g\in \mathcal{S}(\RR^d)$ be a Schwartz function. We then have
 $Y_p^\bullet g = (X^\bullet)^* g \in \Dom(-\Delta_{\Omega, p}^\bullet) \cap \Dom(\Delta_{\Omega, 2}^\bullet)$ and
 \[
  Y^\bullet_p (-\Delta_{\tilde\Omega, p})g
  =
  (X^\bullet)^*(-\Delta_{\tilde \Omega, 2})g
  =
  (-\Delta_{\Omega, 2}^\bullet) (X^\bullet)^* g = (-\Delta_{\Omega, p}^\bullet) Y^\bullet_p g
  .
 \]
 The extension relation $Y^\bullet_p (-\Delta_{\tilde\Omega, p})\subset  (-\Delta_{\Omega, p}) Y^\bullet_p$ is then obtained by approximation since $\mathcal{S}(\RR^d)$ is an operator core for $-\Delta_{\tilde\Omega, p}$. 
Theorem \ref{thm:abstractResult} can then be applied with $Y=Z= Y^\bullet_p$ and $\hat Y = \hat Y^\bullet_p$. 
\end{rmk}

\begin{rmk}\label{rmk:triangle}
 In the above considerations, the operator $X^\bullet$ models the extension of a function in $L^2(\Omega)$ to a function in
 $L^2(\tilde{\Omega})$ by reflection with respect to one hyperplane. However, in view of the general form of
 Theorem~\ref{thm:abstractResult}, also other forms of extensions are feasible. For instance, one could consider to prolong
 functions on a sector of angle $\pi/(2n)$ in the plane to functions on an orthant by successive reflections with respect to different lines.
 In a similar way, in the work~\cite{P98} the author considers the prolongation of functions on the equilateral triangle with corners
 $(0,0)$, $(0,1)$, $(0,1/\sqrt{3})$ to functions on the rectangle $(0,\sqrt{3})\times(0,1)$. One then has to prove results analogous
 to Lemma~\ref{lemma:domainX} and relation~\eqref{eq:quasiCommutatora}, allowing to infer null-controllability results on such
 sectors and the equilateral triangle from those on the orthant and the rectangle, respectively. In fact, in case of the equilateral
 triangle, these analogous results follow to some extend from the considerations in~\cite{P98} already.
\end{rmk}

\appendix

\section{An integration by parts formula}\label{sec:intParts}
The following result is probably folklore, but in lack of a suitable reference we give an elementary proof using only integration by
parts on hypercubes. We emphasize that we are not assuming any boundary regularity for the set $\tilde{\Omega}$.

\begin{lemma}\label{lem:intParts}
 Let $\tilde{\Omega}\subset\RR^d$ be an open set with $\Gamma:=\tilde{\Omega}\cap(\{0\}\times\RR^{d-1})\neq\emptyset$, and set
 $\Omega:=\tilde{\Omega}\cap((0,+\infty)\times\RR^{d-1})$. Let $f\in H^1(\Omega)$ and $\phi\in C_c^\infty(\tilde{\Omega})$.
 \begin{enumerate}
  \renewcommand{\theenumi}{\alph{enumi}}
  \item One has
        \[
         \int_\Omega \phi(x)(\partial_k f)(x) \dd x = -\int_\Omega f(x)(\partial_k\phi)(x) \dd x
        \]
        for $k\in\{2,\dots,d\}$.
  \item If, in addition, $f\in H_0^1(\Omega)$ or $\phi|_\Gamma=0$, then
        \[
         \int_\Omega \phi(x)(\partial_1 f)(x) \dd x = -\int_\Omega f(x)(\partial_1\phi)(x) \dd x.
        \]
  \item If $\phi|_\Gamma=0$, then $\phi|_\Omega\in H_0^1(\Omega)$.
 \end{enumerate}

 \begin{proof}
  Since $\supp\phi\subset\tilde{\Omega}$ is compact, there is $\epsilon>0$ with
  \[
   2\epsilon < \dist_\infty(\supp\phi,\partial\tilde{\Omega}),
  \]
  where the distance is taken with respect to the maximum norm. We now cover $\supp\phi\cap([0,+\infty)\times\RR^{d-1})$ with
  suitable cubes from an equidistant lattice of mesh size $\epsilon$. More precisely, we choose a finite subset
  $J\subset\NN\times\ZZ^{d-1}$ with $\NN=\{1,2,\dots\}$ such that the compact set
  \[
   Q := \bigcup_{j\in J} \overline{\Lambda_\epsilon(j)},\quad
   \Lambda_\epsilon(j) := \frac{\epsilon}{2}j + \Bigl(-\frac{\epsilon}{2},\frac{\epsilon}{2}\Bigr)^d,
  \]
  is a subset of $\tilde{\Omega}\cap([0,+\infty)\times\RR^{d-1})$ containing $\supp\phi\cap([0,+\infty)\times\RR^{d-1})$ and such that
  $\supp\phi\cap((0,+\infty)\times\RR^{d-1})$ lies in the interior $Q^\circ$ of $Q$. Clearly, we have
  $Q\setminus(\{0\}\times\RR^{d-1})\subset\Omega$.

  An integration by parts on each cube $\Lambda=\Lambda_\epsilon(j)\subset\Omega$, $j\in J$, yields
  \begin{equation}\label{eq:intPartCube}
   \begin{aligned}
    \int_{\Lambda} \phi(x)(\partial_kf)(x)\dd x = \int_{\Gamma_k^+} f(x)&\phi(x) \dd\sigma(x) 
      - \int_{\Gamma_k^-} f(x)\phi(x) \dd\sigma(x)\\
    &-\int_{\Lambda} f(x)(\partial_k\phi)(x)\dd x,
   \end{aligned}
  \end{equation}
  where $\Gamma_k^+$ and $\Gamma_k^-$ denote the two opposite faces of the cube $\Lambda$ (relatively open in the corresponding
  hyperplane) with outward unit normal in positive and negative direction with respect to the $k$-th coordinate axis, respectively.
  Now, each of the faces $\Gamma_k^+$ and $\Gamma_k^-$ either corresponds to the face of exactly one other cube $\Lambda_\epsilon(l)$,
  $l\in J\setminus\{j\}$, but then with the opposite direction of the corresponding outward unit normal, or the face belongs to the
  boundary of $Q$. In the latter case, the integral over the face vanishes unless the face belongs to the hyperplane
  $\{0\}\times\RR^{d-1}$, since $\supp\phi\cap((0,+\infty)\times\RR^{d-1})$ lies in the interior of $Q$. This means that after
  summing over all cubes $\Lambda_\epsilon(j)$, $j\in J$, in~\eqref{eq:intPartCube} only boundary integrals for faces in the
  hyperplane $\{0\}\times\RR^{d-1}$ can survive, and these faces have an outward unit normal in the negative direction in the first
  coordinate ($k=1$).

  Thus, after summing in~\eqref{eq:intPartCube} over all cubes $\Lambda_\epsilon(j)$, $j\in J$, in both cases (a) and (b) we have no
  remaining boundary integrals, so that
  \[
   \int_Q \phi(x)(\partial_kf)(x)\dd x = -\int_Q f(x)(\partial_k\phi)(x)\dd x,
  \]
  where for $k=1$ we may approximate $f\in H_0^1(\Omega)$ with functions from $C_c^\infty(\Omega)$ and then take the limit. Since
  $\supp\phi\cap((0,+\infty)\times\RR^{d-1})\subset Q^\circ\subset\Omega$, this proves (a) and (b).

  It remains to show that $\phi|_\Omega\in H_0^1(\Omega)$ if $\phi|_\Gamma=0$. For that, we choose an open cube
  $\tilde\Lambda\subset\RR^d$ with $\supp\phi\subset\tilde\Lambda$ and
  $\Lambda':=\tilde\Lambda\cap((0,+\infty)\times\RR^{d-1})\neq\emptyset$. Extending $\phi$ by zero on
  $\tilde{\Lambda}\setminus\tilde{\Omega}$, and taking into account that $\phi|_\Gamma=0$ by hypothesis, we have
  $\phi\in C_c^\infty(\tilde{\Lambda})$ and $\phi=0$ on $\partial\Lambda'$. Since $\Lambda'$ is convex and therefore has a Lipschitz
  boundary, see, e.g.,~\cite[Corollary~1.2.2.3]{Gri85}, this implies that $\phi|_{\Lambda'}\in H_0^1(\Lambda')$, see,
  e.g.,~\cite[Corollary~1.5.1.6]{Gri85} or~\cite[Lemma~A6.10]{Alt06}. Thus, there exists a sequence $(\varphi_k)$ in
  $C_c^\infty(\Lambda')$ such that $\varphi_k\to\phi|_{\Lambda'}$ in $H^1(\Lambda')$ as $k\to\infty$. We choose a smooth cutoff function
  $\eta\in C_c^\infty(\tilde\Omega)$ with $\eta=1$ on $\supp\phi$. Since $(\eta\phi)|_\Omega=\phi|_\Omega$, it is
  straightforward to verify that $(\eta\varphi_k)|_\Omega\to\phi|_\Omega$ in $H^1(\Omega)$ as $k\to\infty$. Since also
  $(\eta\varphi_k)|_\Omega\in C_c^\infty(\Omega)$, we conclude that $\phi|_\Omega\in H_0^1(\Omega)$. This proves (c) and, hence,
  completes the proof of the lemma.
 \end{proof}%
\end{lemma}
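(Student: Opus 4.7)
The plan is to reduce everything to classical integration by parts on hypercubes, since no boundary regularity of $\tilde{\Omega}$ is available. Because $\supp\phi$ is a compact subset of the open set $\tilde{\Omega}$, its distance to $\partial\tilde{\Omega}$ is strictly positive, so I can fix $\epsilon>0$ smaller than (say) half this distance and pick a finite family of closed cubes of side $\epsilon$ from the lattice $(\epsilon/2)\ZZ^d$ whose union $Q$ covers $\supp\phi \cap ([0,+\infty)\times\RR^{d-1})$, sits inside $\tilde{\Omega}\cap([0,+\infty)\times\RR^{d-1})$, and contains $\supp\phi \cap ((0,+\infty)\times\RR^{d-1})$ in its interior. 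Note that each open cube in this cover lies in $\Omega$ since cubes touching the hyperplane $\{0\}\times\RR^{d-1}$ do so only along a face.

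On each such open cube $\Lambda$, classical integration by parts applies to $f\vert_\Lambda\in H^1(\Lambda)$ against $\phi\in C^\infty$ and produces a volume term plus surface integrals over the $2d$ faces. When I sum over all cubes, any face shared by two cubes contributes with opposite outward unit normals and hence cancels. A face lying on $\partial Q$ but not in $\{0\}\times\RR^{d-1}$ lies outside the interior where $\phi$ can be nonzero, so its contribution is zero. The only surface integrals that can survive are therefore on faces contained in $\{0\}\times\RR^{d-1}$, whose outward unit normal is $-e_1$. For part (a) these contribute only to derivatives in the $e_1$ direction, so for $k\ge 2$ they vanish automatically, giving the identity on $Q$, and hence on $\Omega$ since $\phi$ is supported in $Q^\circ$ relative to $\Omega$.

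For part (b) the $\{0\}\times\RR^{d-1}$ boundary contribution does appear when $k=1$. If $\phi\vert_\Gamma=0$ the contribution vanishes directly. If $f\in H_0^1(\Omega)$ I first approximate $f$ in $H^1(\Omega)$ by $\varphi_n\in C_c^\infty(\Omega)$; since each $\varphi_n$ vanishes near $\Gamma$ the boundary contribution is zero, and passing to the limit (both sides are continuous in $f$) gives the identity for $f$. For part (c), the strategy is to extend $\phi$ by zero to an open box $\tilde\Lambda \supset \supp\phi$ and set $\Lambda' = \tilde\Lambda \cap ((0,+\infty)\times\RR^{d-1})$. The hypothesis $\phi\vert_\Gamma=0$ together with $\phi=0$ on $\partial\tilde\Lambda\cap\Omega$ makes $\phi$ vanish on $\partial\Lambda'$; since $\Lambda'$ is convex and therefore Lipschitz, classical half‑space/Lipschitz results imply $\phi\vert_{\Lambda'}\in H_0^1(\Lambda')$, and a cutoff $\eta\in C_c^\infty(\tilde\Omega)$ equal to $1$ on $\supp\phi$ transports approximating sequences in $C_c^\infty(\Lambda')$ to sequences in $C_c^\infty(\Omega)$ converging to $\phi\vert_\Omega$ in $H^1(\Omega)$.

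The only delicate point is the bookkeeping in the cube argument: one must verify that the chosen lattice cover can indeed be arranged so that $\supp\phi \cap ((0,+\infty)\times\RR^{d-1})$ lies in the interior of $Q$ while $Q$ remains inside $\tilde{\Omega}$, so that faces in the interior halfspace carry no $\phi$-mass. This is where the choice $2\epsilon < \dist_\infty(\supp\phi,\partial\tilde{\Omega})$ is used, and it is the place the argument would most easily go wrong. The other steps (cube-by-cube Stokes, density for $H_0^1$, convex-implies-Lipschitz) are standard and impose no further obstacle.
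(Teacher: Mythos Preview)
Your proposal is correct and follows essentially the same argument as the paper: a finite cover of $\supp\phi\cap([0,+\infty)\times\RR^{d-1})$ by lattice cubes of side $\epsilon$ with $2\epsilon<\dist_\infty(\supp\phi,\partial\tilde\Omega)$, cube-by-cube integration by parts with cancellation on shared faces, the observation that only faces in $\{0\}\times\RR^{d-1}$ (hence only $k=1$) can carry surviving boundary terms, approximation by $C_c^\infty(\Omega)$ for the $H_0^1$ case, and the convex-hence-Lipschitz box plus cutoff argument for part~(c). Even your identification of the delicate bookkeeping step matches the paper's emphasis.
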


\section*{Acknowledgements}
The authors are grateful to Christoph Schumacher for an inspiring discussion and to Christian Seifert for a helpful correspondence.
The authors also thank the anonymous referee of an earlier version of this manuscript for a useful comment that helped to improve
this manuscript.


\end{document}